\newcommand{\AlgPhase}[1]{%
  \vspace{0.3em} 
  \Statex \textbf{#1} 
  \vspace{0.1em} 
}
\numberwithin{equation}{section}
\newtheorem{theorem}{Theorem}[section]
\newtheorem{proposition}[theorem]{Proposition}
\newtheorem{remark}[theorem]{Remark}
\newtheorem{assumption}{Assumption}[section]
\newcommand{\dv}{\mathrm{d}v}
\newcommand{\dx}{\mathrm{d}x}
\newcommand{\modifyfirst}[1]{#1}
\newcommand{\modifysecond}[1]{#1}
\title{A separable and asymptotic-preserving dynamical low-rank method for the Vlasov--Poisson--Fokker--Planck system}
\date{}
\author{Shiheng Zhang\footnote{Department of Applied Mathematics, University of Washington, Seattle, WA 98195 (shzhang3@uw.edu). Corresponding author.} \
	\ and \  Jingwei Hu\footnote{Department of Applied Mathematics, University of Washington, Seattle, WA 98195 (hujw@uw.edu).} }    
\begin{document}
\maketitle

\begin{abstract}
We present a dynamical low-rank (DLR) method for the Vlasov--Poisson--Fokker--Planck (VPFP) system. Our main contributions are two-fold: (i) a conservative spatial discretization of the 
Fokker--Planck operator that factors into velocity-only and space-only components, enabling efficient low-rank projection, and (ii) a time discretization within the DLR framework that properly handles stiff collisions. We propose both first-order and second-order low-rank IMEX schemes. For the first-order scheme, we prove an asymptotic-preserving (AP) property when the field fluctuation is small. Numerical experiments demonstrate accuracy, robustness, and AP property at modest ranks.
\end{abstract}

{\small 
{\bf Key words.} Vlasov--Poisson--Fokker--Planck (VPFP), dynamical low-rank (DLR), projector-splitting, asymptotic-preserving (AP), IMEX time integration

{\bf AMS subject classifications.} 35Q83, 35Q84, 65F30, 65M06, 65M12
}

\section{Introduction}
\label{sec:intro}

Kinetic equations provide a mesoscopic  description of dilute particle systems by evolving the phase-space distribution under transport, field effects, and collisions. In this work, we consider the Vlasov--Poisson--Fokker--Planck (VPFP) system
\begin{equation}\label{eq:VPFP-original}
  \partial_t f(x,v,t) + v\cdot\nabla_x f(x,v,t) + \frac{1}{\varepsilon}\,E(x,t)\cdot\nabla_v f(x,v,t)
  \;=\; \frac{1}{\varepsilon}\,\left(\Delta_v f(x,v,t) + \nabla_v \cdot (v f(x,v,t))\right),
\end{equation}
where $f(x,v,t)$ is the distribution function depending on time $t$, position \(x\in\Omega_x\subset\mathbb{R}^{d_x}\), and velocity \(v\in\mathbb{R}^{d_v}\). $E(x,t)$ is the electric field governed by the Poisson equation with potential \(\phi(x,t)\):
\begin{equation}\label{eq:Poisson}
  E(x,t) = -\nabla_x\phi(x,t),\qquad -\Delta_x\phi(x,t)=\rho(x,t)-\eta(x),
\end{equation}
where \(\rho(x,t)=\int_{\mathbb{R}^{d_v}}f\,\dv\) is the charge density and $\eta(x)$ is a given background charge satisfying the global neutrality condition \(\int_{\Omega_x}(\rho(x,0)-\eta(x))\,\dx=0\). The collision operator on the right-hand side of \eqref{eq:VPFP-original} is the classical linear Fokker--Planck operator \cite{dougherty1964model, villani2002review}. \(\varepsilon>0\) is a nondimensional parameter determining the strength of both the electric field and collisions. \modifyfirst{The regime \(\varepsilon\to 0\) is characterized by rapid relaxation in velocity and macroscopic high-field dynamics in physical space \cite{nieto2001high,poupaud1992runaway, cercignani2000device}. In this paper, we focus on the high-field scaling, which leads to a hyperbolic equation in the limit, as shown below. We also note that the alternative low-field regime, in which the field term is $\mathcal{O}(\varepsilon)$ smaller than the Fokker--Planck operator, instead leads to a parabolic limit~\cite{poupaud2000parabolic, arnold2001low}.} 

Since \(E(x,t)\) is independent of \(v\), we have \(\nabla_v\cdot(E f)=E\cdot\nabla_v f\). We may absorb the high-field drift into the right-hand side of \eqref{eq:VPFP-original} to obtain a more succinct form
\begin{equation}\label{eq:VPFP}
     \partial_t f + v\cdot\nabla_x f \;=\; \frac{1}{\varepsilon}\,\nabla_v\cdot\left(M\,\nabla_v\left(\tfrac{f}{M}\right)\right),
\end{equation}
where the local Maxwellian \(M\) is defined as
\begin{equation}\label{eq:Maxwellian_def}
   M(x,v,t)=\frac{1}{(2\pi)^{d_v/2}}\exp\left(-\tfrac12\,|v-E(x,t)|^2\right).
\end{equation}
Taking the zeroth and first velocity moments of \eqref{eq:VPFP} (i.e., $\int \cdot\,\dv$ and $\int \cdot\, v\,\dv$) yields the continuity and momentum balances
\begin{align}
  \partial_t \rho + \nabla_x \cdot J &= 0, \label{eq:continuity}\\
  \partial_t J + \nabla_x \cdot \int_{\mathbb{R}^{d_v}}v\otimes v f\,\dv &= \frac{1}{\varepsilon}\,(\rho E - J), \label{eq:momentum}
\end{align}
where \(J(x,t)=\int fv\,\dv\) is the current density. In the high-field limit, the stiff relaxation on the right-hand side of \eqref{eq:momentum} drives \(J \approx \rho E\) to leading order. Substituting this into \eqref{eq:continuity} yields the closed macroscopic system
\begin{equation}\label{eq:macro_drift}
  \begin{cases}
  \partial_t \rho + \nabla_x \cdot (\rho E) = 0, \\[2pt]
  E = -\nabla_x \phi, \quad -\Delta_x \phi = \rho - \eta.
  \end{cases}
\end{equation}
This macroscopic drift system represents the high-field limit of the VPFP system as \(\varepsilon \to 0\).

Numerical simulation of the VPFP system presents two principal difficulties: high dimensionality and stiffness. The phase-space distribution \(f(x,v,t)\) evolves in $\mathbb{R}^{d_x+d_v}$. Discretizing this domain leads to computational and storage costs on the order of \(\mathcal{O}(N_x^{d_x} N_v^{d_v})\), which becomes prohibitive in high dimensions. To mitigate this ``curse of dimensionality'', a promising approach is the dynamical low-rank (DLR) approximation \cite{koch2007dynamical, lubich2014projector}. 
This approach constrains the solution to a manifold of fixed rank \(r\):
\[
f(x,v,t) \approx \sum_{i,j=1}^r X_i(x,t)\, S_{ij}(t)\, V_j(v,t),
\]
where the orthonormal factors \(\{X_i\}_{i=1}^{r}\) and \(\{V_j\}_{j=1}^{r}\) are evolved by projecting the governing equation onto the tangent space of the low-rank manifold. 
The efficiency of the DLR approximation relies critically on the structure of the operators in the governing equation. When the operator exhibits a separable structure in $x$ and $v$, the computational cost scales as $\mathcal{O}(r^2(N_x^{d_x} + N_v^{d_v}) + r^4)$, and the storage requirement scales as $\mathcal{O}(r(N_x^{d_x} + N_v^{d_v}) + r^2)$. This substantial saving in the regime $r \ll \min\{N_x^{d_x}, N_v^{d_v}\}$ is what makes high-dimensional kinetic simulations feasible. In contrast, if the operator lacks a separable structure, evaluating the dynamics typically requires reconstructing the full tensor, which reverts the cost to the prohibitive $\mathcal{O}(N_x^{d_x} N_v^{d_v})$. Recently developed interpolatory DLR methods \cite{carrel2025interpolatory, dektor2025interpolatory} bypass this issue by employing collocation or empirical interpolation. However, additional error is introduced by replacing the orthogonal projection in the DLR method with an oblique projection. There are many other recent developments on DLR methods applied to general kinetic equations and we refer the reader to the review paper \cite{EKKMQ25}.

Due to the stiff field term and collision term, applying the DLR method to the VPFP system in the high-field regime is not straightforward. Explicit time integration requires restrictive time steps \(\Delta t=\mathcal{O}(\varepsilon)\). This necessitates asymptotic-preserving (AP) schemes, which allow \(\varepsilon\)-independent time steps while recovering the correct macroscopic limit \cite{jin1999efficient, jin2022asymptotic}. While AP schemes for the VPFP system are well-established for full-tensor methods \cite{jin2011asymptotic, carrillo2021variational, zhu2017hypocoercivity}, extending them to the DLR framework is non-trivial. \modifysecond{Recent works have developed AP DLR formulations for several kinetic models, including linear transport  equations in the diffusion limit~\cite{ding2021dynamical,einkemmer2021asymptotic,einkemmer2024asymptotic}, the nonlinear Boltzmann equation in the fluid limit~\cite{einkemmer2025asymptotic}, and the radiative transfer equation \cite{FKP25}. In these models, the collision operator is local in space and acts only on the velocity (or angular) variable, so its action on a low-rank ansatz preserves the separable structure and admits an efficient low-rank projection.} The central difficulty in the present setting lies in the Fokker--Planck operator on the right-hand side of \eqref{eq:VPFP}: the spatially varying field \(E(x,t)\) creates a strong coupling between \(x\) and \(v\) that destroys the separability required for efficient low-rank projection\footnote{The separability is not an issue if one uses the original formulation \eqref{eq:VPFP-original}. However, this form makes the design of an AP scheme difficult.}.
This non-separability issue was addressed in \cite{coughlin2022efficient} by evolving the quotient $f/M$ rather than $f$, followed by a complicated change of variables. In this paper, we propose a direct approach: a conservative and separable discretization of the Fokker--Planck operator that allows efficient low-rank projection without variable transformation or full tensor reconstruction. Furthermore, the stiff Fokker--Planck operator in the low-rank framework requires careful time discretization to ensure stability. We propose both first-order and second-order low-rank IMEX schemes. For the first-order scheme, we
prove an AP property when the field fluctuation is small. 


The rest of this paper is organized as follows. 
Section~\ref{sec:dlr_algorithm} establishes the dynamical low-rank formulation for the VPFP system.
Section~\ref{sec:discretization} describes the separable, conservative discretization of the Fokker--Planck operator and its low-rank projection.
Section~\ref{sec:time_discretization} presents the fully discrete first-order and second-order low-rank IMEX schemes.
Section~\ref{sec:AP} provides the asymptotic-preserving analysis for the first-order low-rank scheme.
Section~\ref{sec:numerical_experiments} reports numerical experiments demonstrating the accuracy and robustness of the proposed methods, and we conclude in Section~\ref{sec:con}.

\section{Dynamical low-rank formulation for the VPFP system}
\label{sec:dlr_algorithm}

In this section, we describe the projector-splitting dynamical low-rank (DLR) method, originally proposed by Lubich and Oseledets \cite{lubich2014projector}, applied to the VPFP system  adopting the form \eqref{eq:VPFP}. We define the advection operator $\mathcal{A}$ and the Fokker--Planck operator $\mathcal{L}$ as
\begin{align}
  \mathcal{A}(f) &\,:=\; -\,v\cdot\nabla_x f, \label{eq:op_A_def} \\
  \mathcal{L}(f) &\,:=\; \nabla_v \cdot \left(M\,\nabla_v \left(\tfrac{f}{M}\right)\right),\label{eq:op_L_def}
\end{align}
so that  \eqref{eq:VPFP} can be written as 
\begin{equation}
\label{eq:VPFP-1}
    \partial_t f = \mathcal{A}(f) + \frac{1}{\varepsilon}\mathcal{L}(f).
\end{equation}

The DLR method approximates the solution \(f(x,v,t)\) by constraining it to the manifold \(\mathcal{M}_r\) of rank-\(r\) functions. We employ the low-rank ansatz
\begin{equation}\label{eq:rank_dlra_sum}
  f(x,v,t) \;=\;
\sum_{i,j=1}^r X_i(x,t)\,S_{ij}(t)\,V_j(v,t),
\end{equation}
where \(S(t) \in \mathbb{R}^{r \times r}\) is the coefficient matrix, and \(\{X_i(x,t)\}_{i=1}^r \subset L^2(\Omega_x)\) and \(\{V_j(v,t)\}_{j=1}^r \subset L^2(\mathbb{R}^{d_v})\) are orthonormal bases satisfying
\[
  \langle X_i(\cdot,t),X_{i'}(\cdot,t)\rangle_x=\delta_{ii'},
  \qquad
  \langle V_j(\cdot,t),V_{j'}(\cdot,t)\rangle_v=\delta_{jj'}.
\]
Here $\langle \phi, \psi\rangle_x:=\int_{\Omega_x} \phi(x)\psi(x)\,\dx$ and $\langle \phi, \psi\rangle_v:=\int_{\mathbb{R}^{d_v}} \phi(v)\psi(v)\,\dv$ denote the spatial and velocity inner products, respectively; we will also need $\langle \phi, \psi\rangle_{xv}:=\int_{\Omega_x}\int_{\mathbb{R}^{d_v}}  \phi(x,v)\psi(x,v)\,\dx\,\dv$ for the full phase-space inner product. 

According to the Dirac--Frenkel variational principle \cite{koch2007dynamical, lubich2008quantum}, the evolution of the low-rank approximation is determined by the orthogonal projection of the vector field \(\mathcal{A}(f) + \frac{1}{\varepsilon}\mathcal{L}(f)\) onto the tangent space \(T_f\mathcal{M}_r\) at the current approximation \(f\):
\begin{equation}\label{eq:projected_eq_dlr_sum}
  \partial_t f \;=\; P_f\big(\mathcal{A}(f) + \frac{1}{\varepsilon}\mathcal{L}(f)\big).
\end{equation}
The projection operator \(P_f: L^2(\Omega_x \times \mathbb{R}^{d_v}) \to T_f\mathcal{M}_r\) is given explicitly by \cite{lubich2014projector}:
\begin{equation}\label{eq:Pf_standard_sum}
  P_f(g) \;=\; (I \otimes P_V)\,g \;-\; (P_X \otimes P_V)\,g \;+\;
(P_X \otimes I)\,g,
\end{equation}
where \(P_X\) and \(P_V\) are the orthogonal projectors onto the subspaces spanned by \(\{X_i\}_{i=1}^{r}\) and \(\{V_j\}_{j=1}^{r}\), respectively.

Direct integration of \eqref{eq:projected_eq_dlr_sum} is numerically challenging due to the curvature of the manifold. We employ the robust projector-splitting integrator \cite{lubich2014projector}, which splits the projection \eqref{eq:Pf_standard_sum} into three coupled differential equations (K--S--L splitting) solved sequentially.

\paragraph{K-step (update the spatial basis).}
We first solve the subproblem corresponding to the term \((I \otimes P_V)(\mathcal{A}+\frac{1}{\varepsilon}\mathcal{L})(f)\). In this step, the velocity basis \(\{V_j\}_{j=1}^{r}\) is frozen. We represent the solution as
\begin{equation}\label{eq:f_K_form}
    f(x,v,t) = \sum_{j=1}^r K_j(x,t)\,V_j(v,t), \qquad
    K_j(x,t) = \sum_{i=1}^r X_i(x,t)\,S_{ij}(t).
\end{equation}
Projecting the dynamics onto the fixed basis \(V_j\) yields a system of \(r\) evolution equations for the spatial coefficients \(K_j\):
\begin{equation}\label{eq:K_comp_evo}
    \partial_t K_j(x,t) = \left\langle (\mathcal{A} + \frac{1}{\varepsilon}\mathcal{L}) \left(\sum_{\ell=1}^r K_\ell\,V_\ell\right),\, V_j \right\rangle_v,
    \quad j=1,\dots,r.
\end{equation}
After evolving \eqref{eq:K_comp_evo} for one time step, a QR factorization of \(\{K_j(\cdot, t^{n+1})\}_{j=1}^{r}\) yields the orthonormal spatial basis \(\{X_i(x,t^{n+1})\}_{i=1}^{r}\) and an intermediate coefficient matrix \(S^{(1)}\).

\paragraph{S-step (update the coefficient matrix).}
Next, we solve the subproblem corresponding to \(-(P_X \otimes P_V)(\mathcal{A}+\frac{1}{\varepsilon}\mathcal{L})(f)\). Here, both bases are frozen. The evolution is restricted to the coefficient matrix \(S\):
\begin{equation}\label{eq:S_comp_evo}
    \frac{d S_{ij}}{dt}
    \;=\;
-\,\left\langle (\mathcal{A} + \frac{1}{\varepsilon}\mathcal{L}) \left(\sum_{k,\ell=1}^r X_k\,S_{k\ell}\,V_{\ell}\right),\; X_i\,V_j \right\rangle_{xv},
    \quad i,j=1,\dots,r.
\end{equation}
This step updates the coefficient matrix from \(S^{(1)}\) to \(S^{(2)}\). The negative sign implies this step effectively evolves backward in time.

\paragraph{L-step (update the velocity basis).}
Finally, we solve the subproblem corresponding to \((P_X \otimes I)(\mathcal{A}+\frac{1}{\varepsilon}\mathcal{L})(f)\). In this step, the spatial basis \(\{X_i\}_{i=1}^{r}\) is frozen. We  represent the solution as
\begin{equation}\label{eq:f_L_form}
    f(x,v,t) = \sum_{i=1}^r X_i(x,t)\,L_i(v,t), \qquad
    L_i(v,t) = \sum_{j=1}^r S_{ij}(t)\,V_j(v,t).
\end{equation}
Projecting onto the fixed basis \(X_i\) yields \(r\) evolution equations for the velocity coefficients \(L_i\):
\begin{equation}\label{eq:L_comp_evo}
    \partial_t L_i(v,t) = \left\langle (\mathcal{A} + \frac{1}{\varepsilon}\mathcal{L}) \left(\sum_{k=1}^r X_k\,L_k\right),\, X_i \right\rangle_x,
    \quad i=1,\dots,r.
\end{equation}
After evolving \eqref{eq:L_comp_evo} for one time step, a QR factorization of \(\{L_i(\cdot, t^{n+1})\}_{i=1}^{r}\) yields the orthonormal velocity basis \(\{V_j(v,t^{n+1})\}_{j=1}^{r}\) and the final coefficient matrix \(S(t^{n+1})\).

\medskip
\noindent\textbf{Separability of the advection operator.}
The efficiency of the DLR method relies on the separability of the operator. The advection operator $\mathcal{A}(f) = -v \cdot \nabla_x f$ naturally satisfies this requirement. Its projection onto the low-rank basis does not require reconstructing the full distribution $f$. For example, in the K-step \eqref{eq:K_comp_evo}, the projection onto the velocity basis $V_j$ becomes:
\[
  \langle \mathcal{A}(f), V_j \rangle_v
  = -\sum_{\ell=1}^r \langle vV_\ell,V_j\rangle_v\cdot \nabla_x K_\ell,
  \]
where the velocity and space dependent terms can be computed separately. If one chooses to discretize the differential operator first and then perform the projection, standard linear finite-difference approximations (e.g., upwind scheme) will preserve this separable structure.

\medskip
\noindent\textbf{The non-separable Fokker--Planck bottleneck.}
In contrast, the Fokker--Planck operator $\mathcal{L}$ in \eqref{eq:op_L_def} poses a significant challenge, where the Maxwellian function $M(x,v,t)$, as defined in \eqref{eq:Maxwellian_def}, depends on the macroscopic electric field $E(x,t)$, introducing a non-trivial coupling between $x$ and $v$. Consequently, its projection onto the low-rank basis, such as $\langle \mathcal{L}(f), X_i \rangle_x$, would generally require constructing the full tensor, incurring a computational cost of $\mathcal{O}(N_x^{d_x} N_v^{d_v})$, thereby defeating the purpose of the low-rank approach. Resolving this bottleneck is the primary contribution of this work: in Section \ref{sec:discretization}, we propose a novel discrete formulation of $\mathcal{L}$ that recovers a separable structure, enabling efficient DLR evolution.

\section{Separable discretization of the Fokker--Planck operator and its low-rank projection}
\label{sec:discretization}

In this section, we construct a conservative finite-difference approximation of the Fokker--Planck operator \eqref{eq:op_L_def} that 
decouples the spatial and velocity components. We then project this discrete operator onto the low-rank basis, demonstrating how this separable structure enables efficient DLR evolution.

We consider one spatial and one velocity dimension (1D1V) for simplicity; the approach can be extended to higher dimensions straightforwardly.\footnote{\modifysecond{In the 1D1V setting used throughout the rest of this paper, the discrete distribution $f_{p,q}$ is a matrix rather than a tensor as referred to in Section~\ref{sec:dlr_algorithm}. However, we retain the term ``tensor" for consistency with the higher-dimensional formulation.}} We assume that the spatial and velocity domains are $[x_{\min},x_{\max}]$ and $[v_{\min},v_{\max}]$, respectively, with grid points 
given by
\[
  x_p \;=\; x_{\min} + \Big(p-\tfrac{1}{2}\Big)\,\Delta x, \quad p=1,\dots,N_x,
  \qquad
  v_q \;=\; v_{\min} + \Big(q-\tfrac{1}{2}\Big)\,\Delta v, \quad q=1,\dots,N_v,
\]
where $\Delta x=(x_{\max}-x_{\min})/N_x$ and $\Delta v=(v_{\max}-v_{\min})/N_v$.
We impose periodic boundary condition in \(x\) and zero-flux boundary condition in \(v\). The discrete low-rank ansatz for the distribution \(f\) at grid point \((x_p,v_q)\) is
\begin{equation}\label{eq:lr_ansatz_grid}
  f_{p,q}(t) \;=\; \sum_{i=1}^r \sum_{j=1}^r X_{i,p}(t)\,S_{ij}(t)\,V_{j,q}(t),
\end{equation}
where the basis vectors are orthonormal with respect to the discrete inner products defined by
\[
  \langle \phi, \psi \rangle_x^h = \sum_{p=1}^{N_x} \phi_p\,\psi_p\,\Delta x,
  \qquad
  \langle \phi, \psi \rangle_v^h = \sum_{q=1}^{N_v} \phi_q\,\psi_q\,\Delta v.
\]
The full phase-space inner product is given by \[\langle \phi, \psi \rangle_{xv}^h = \sum_{p,q} \phi_{p,q}\,\psi_{p,q}\,\Delta x\,\Delta v.\]

\subsection{Construction of the separable stencil}
For the 1D1V Fokker--Planck operator $\mathcal{L}(f) = \partial_v (M \partial_v (f/M))$, we define its discretization at spatial grid point \(x_p\) and velocity grid point \(v_q\) as:
\begin{equation}\label{eq:LFP_semi_disc}
  [\mathcal{L}_{h}(f)]_{p,q}
  \;=\;\frac{1}{\Delta v}\Big(\mathcal{J}_{p,q+1/2}-\mathcal{J}_{p,q-1/2}\Big),
\end{equation}
where the flux $\mathcal{J}_{p,q+1/2}$ is given by
\begin{equation}\label{eq:flux_half_def}
  \mathcal{J}_{p,q+1/2}
  \;=\; M_{p,q+1/2}\,\frac{(f/M)_{p,q+1}-(f/M)_{p,q}}{\Delta v},
  \qquad
  M_{p,q+1/2}\;=\;\sqrt{M_{p,q}\,M_{p,q+1}}.
\end{equation}
Note that we use a geometric-mean interpolation for the Maxwellian at the half grid point \cite{JY11}. Other interpolations, such as the arithmetic mean $M_{p,q+1/2} = \frac{1}{2}(M_{p,q} + M_{p,q+1})$, are also possible, but would result in more terms in the separable expansion.

Substituting \eqref{eq:flux_half_def} into \eqref{eq:LFP_semi_disc} yields the three-point conservative stencil:
\begin{equation}\label{eq:fp_stencil_form}
  [\mathcal{L}_{h}(f)]_{p,q}
  \;=\;\frac{1}{\Delta v^2} \left[
    \sqrt{\frac{M_{p,q}}{M_{p,q+1}}}\,f_{p,q+1}
    -\Big(\sqrt{\frac{M_{p,q+1}}{M_{p,q}}}+\sqrt{\frac{M_{p,q-1}}{M_{p,q}}}\Big)\,f_{p,q}
    +\sqrt{\frac{M_{p,q}}{M_{p,q-1}}}\,f_{p,q-1}
  \right].
\end{equation}
Using $v_{q+1}-v_q=\Delta v$ and $M_{p,q} \propto \exp (-(v_q-E_p)^2/2)$, we obtain
\begin{align}
  \sqrt{\frac{M_{p,q}}{M_{p,q+1}}}
  &= \exp \left(\frac{(v_{q+1}-E_p)^2-(v_q-E_p)^2}{4}\right)
   \;=\; \underbrace{\exp \Big(\tfrac{\Delta v}{4}\,(v_q+v_{q+1})\Big)}_{\displaystyle :=\alpha_{q+1/2}}
       \;\underbrace{\exp \Big(-\tfrac{\Delta v}{2}\,E_p\Big)}_{\displaystyle :=\beta_p}.\label{eq:ratio_plus}
\end{align}
Hence, it factors cleanly into space-only and velocity-only terms. Using the notation $\alpha_{q+1/2}$ and $\beta_p$, \eqref{eq:fp_stencil_form} can be written as
\begin{equation}\label{eq:fp_discrete_separable}
  [\mathcal{L}_{h}(f)]_{p,q}
  \;=\;\frac{1}{\Delta v^2} \left[
     \alpha_{q+1/2}\beta_p\,f_{p,q+1}
     -\Big(\frac{1}{\alpha_{q+1/2}\beta_p}+\alpha_{q-1/2}\beta_p\Big)\,f_{p,q}
     +\frac{1}{\alpha_{q-1/2} \beta_p} \,f_{p,q-1}
  \right],
\end{equation}
which is a fully separable stencil.


\subsection{Low-rank projection}
\label{sec:fp_projection}

With the discretization \eqref{eq:fp_discrete_separable}, we now show that the projections onto the low-rank basis, as described in the previous section, can all be evaluated efficiently. For brevity, we focus only on the terms that involve the Fokker--Planck operator.

\paragraph{K-step.}
We compute the projection of the discrete operator $\mathcal{L}_h(f)$ onto the velocity basis $V_j$. Substituting the ansatz $f_{p,q} = \sum_{\ell=1}^r K_{\ell,p} V_{\ell,q}$ into \eqref{eq:fp_discrete_separable}, the $p$-th component of the projected term reads:
\begin{equation}
\begin{aligned}
      \Big[\langle \mathcal{L}_{h}(f), V_j \rangle_v^h\Big]_p 
      &= \sum_{\ell=1}^r K_{\ell,p} \Bigg[ 
      \frac{\beta_p}{\Delta v^2} \sum_{q=1}^{N_v} \left( \alpha_{q+1/2} V_{\ell,q+1} - \alpha_{q-1/2} V_{\ell,q} \right)V_{j,q} \Delta v \\
      &\qquad \qquad + \frac{1}{\beta_p \Delta v^2} \sum_{q=1}^{N_v} \left( \frac{V_{\ell,q-1}}{\alpha_{q-1/2}} - \frac{V_{\ell,q}}{\alpha_{q+1/2}} \right)V_{j,q} \Delta v
      \Bigg].
\end{aligned}
\end{equation}

\paragraph{S-step.}
We compute the projection of the discrete operator $\mathcal{L}_h(f)$ onto the tensor product basis $X_i V_j$. Substituting $f_{p,q} = \sum_{k,\ell} X_{k,p} S_{k\ell} V_{\ell,q}$ into \eqref{eq:fp_discrete_separable}, the double summation over space and velocity decouples:
\begin{equation}
\begin{aligned}
    \big\langle \mathcal{L}_{h}(f), X_i V_j \big\rangle_{xv}^h
    &= \sum_{k,\ell=1}^r S_{k\ell} \Bigg[
    \left(\sum_{p=1}^{N_x} \beta_p X_{k,p} X_{i,p} \Delta x\right) \left( \frac{1}{\Delta v^2}\sum_{q=1}^{N_v} \left( \alpha_{q+1/2} V_{\ell,q+1} - \alpha_{q-1/2} V_{\ell,q} \right)V_{j,q}\Delta v \right) \\
    &\quad + \left(\sum_{p=1}^{N_x} \frac{X_{k,p} X_{i,p}}{\beta_p} \Delta x\right) \left( \frac{1}{\Delta v^2}\sum_{q=1}^{N_v} \left( \frac{V_{\ell,q-1}}{\alpha_{q-1/2}} - \frac{V_{\ell,q}}{\alpha_{q+1/2}} \right)V_{j,q}\Delta v \right)
    \Bigg].
\end{aligned}
\end{equation}

\paragraph{L-step.}
We compute the projection of the discrete operator $\mathcal{L}_h(f)$  onto the spatial basis $X_i$. Substituting $f_{p,q} = \sum_{k=1}^r X_{k,p} L_{k,q}$ into \eqref{eq:fp_discrete_separable}, the $q$-th component of the projected term reads:
\begin{equation}
\begin{aligned}
    \Big[\langle \mathcal{L}_{h}(f), X_i \rangle_x^h\Big]_q
    &= \frac{1}{\Delta v^2} \sum_{k=1}^r \Bigg[
    \left(\sum_{p=1}^{N_x} \beta_p X_{k,p} X_{i,p} \Delta x\right) \left( \alpha_{q+1/2} L_{k,q+1} - \alpha_{q-1/2} L_{k,q} \right) \\
    &\quad + \left(\sum_{p=1}^{N_x} \frac{X_{k,p} X_{i,p}}{\beta_p} \Delta x\right) \left( \frac{L_{k,q-1}}{\alpha_{q-1/2}} - \frac{L_{k,q}}{\alpha_{q+1/2}} \right)
    \Bigg].
\end{aligned}
\end{equation}

\section{Time discretization and the fully discrete low-rank scheme}
\label{sec:time_discretization}

Equation \eqref{eq:VPFP-1} involves an advection operator and a diffusive-type Fokker--Planck operator, the latter becoming stiff in the limit $\varepsilon \to 0$.
A robust time integrator must handle this stiffness without imposing overly restrictive time steps, while preserving the correct asymptotic behavior.
In the DLR setting, standard AP schemes cannot be applied straightforwardly;
the projector-splitting structure, specifically the backward S-step, requires special stability considerations \cite{zhang2025stability}.
In this section, we propose first-order and second-order low-rank IMEX schemes.
We employ the discretize-then-project strategy discussed in the previous section, but omit the subscripts $p,q$ for simplicity.

\subsection{A first-order scheme}
\label{sec:first_order_schemes}


We first present a simple first-order IMEX scheme in the full-tensor case, which serves as a basis for constructing the low-rank scheme. Let $\mathcal{A}_h$ denote the discrete advection operator and $\mathcal{L}_h[E]$ denote the discrete Fokker--Planck operator constructed using the electric field $E$. A simple first-order IMEX scheme reads
\begin{equation}\label{eq:full_tensor_imex}
  \frac{f^{n+1} - f^{n}}{\Delta t} = \mathcal{A}_h(f^{n}) + \frac{1}{\varepsilon}\mathcal{L}_h[E^{n+1}](f^{n+1}).
\end{equation}
Due to the dependence on $E^{n+1}$, directly solving for $f^{n+1}$ would require iterations. To get around this, one can use the local mass conservation property, $\langle \mathcal{L}_h[E^{n+1}](f^{n+1}), 1 \rangle_v^h = \mathbf{0}$. Taking the zeroth moment of \eqref{eq:full_tensor_imex} yields 
\begin{equation}\label{eq:rho_explicit_update}
  \frac{\rho^{n+1} - \rho^n}{\Delta t} = \langle \mathcal{A}_h(f^n), 1 \rangle_v^h, \quad \rho^n := \langle f^n, 1 \rangle_v^h.
\end{equation}
Therefore, one can first compute $\rho^{n+1}$ via \eqref{eq:rho_explicit_update}, then solve the Poisson equation \eqref{eq:Poisson} to obtain $E^{n+1}$. With $E^{n+1}$ known, the scheme \eqref{eq:full_tensor_imex} can be easily solved by inverting a linear operator.

\subsubsection{A first-order low-rank IMEX scheme}
\label{sec:first_order_DLR}

In the low-rank framework, we first use \eqref{eq:rho_explicit_update} to obtain a prediction of the density, and then use this predicted density in the subsequent K-S-L steps to update the distribution function. Suppose that we start from the low-rank decomposition $f^n=\sum_{i,j=1}^r X_i^nS_{ij}^nV_j^n$. The predicted density is then computed as
\begin{equation}\label{eq:rho_hat_update}
    \hat{\rho}^{n+1} = \rho^n + \Delta t \langle \mathcal{A}_h(f^n), 1 \rangle_v^h.
\end{equation}
We distinguish the predicted density $\hat{\rho}^n$ from the true density $\rho^n$, defined through the moment of $f^n$, because the low-rank scheme introduced below does not strictly preserve mass. The Poisson equation is then solved using $\hat{\rho}^{n+1}$ to obtain $E^{n+1}$.

With the field $E^{n+1}$ known, the Fokker--Planck operator becomes linear $\mathcal{L}_h^{n+1} := \mathcal{L}_h[E^{n+1}]$. We then apply the projector-splitting integrator to \eqref{eq:VPFP-1}. However, applying the IMEX scheme  \eqref{eq:full_tensor_imex} across all K-S-L steps leads to instability, because the S-step evolves backward in time. \modifysecond{The backward-in-time character of the S-step has been observed in earlier studies of DLR integrators in various settings, motivating the BUG integrators that avoid the backward S-step entirely~\cite{ceruti2022unconventional,CKL22}.  Instability has also been reported empirically for projector-splitting IMEX schemes applied to a kinetic equation with stiff collisions in~\cite{coughlin2022efficient}.  Subsequent stability analyses include \cite{kusch2023stability} for for hyperbolic problems and, most directly relevant to the present setting, an $L^2$-stability analysis in~\cite{zhang2025stability} of a hybrid backward/forward-Euler projector-splitting scheme for a parabolic prototype equation that mimics the diffusion structure of the Fokker--Planck operator.}  Motivated by \cite{zhang2025stability}, we propose a hybrid IMEX scheme: the K and L steps use an implicit (backward Euler) update for the Fokker--Planck operator, while the S step uses an explicit (forward Euler) update. The algorithm is summarized in Algorithm~\ref{alg:first_order}.

\medskip
\noindent\textbf{1. K-step.} Define $K_j^n=\sum_{i=1}^r X_i^nS_{ij}^n$, and solve for $K_j^{(1)}$ using
\begin{equation}\label{eq:K_fd_imex}
  \frac{K_{j}^{(1)} - K_{j}^{n}}{\Delta t} = \langle \mathcal{A}_h(f^{n}), V_{j}^{n}\rangle_v^h + \frac{1}{\varepsilon}\langle \mathcal{L}_h^{n+1}(f^{(1)}), V_{j}^{n}\rangle_v^h,
\end{equation}
where $f^{(1)} = \sum_{j=1}^r K_{j}^{(1)} V_{j}^n$. Computing $\{K^{(1)}_j\}_{j=1}^{r}$ requires solving $N_x$ decoupled $r\times r$ linear systems,
one for each spatial grid point. A QR factorization of $\{K^{(1)}_j\}_{j=1}^{r}$ yields the new spatial basis $\{X^{n+1}_i\}_{i=1}^{r}$ and intermediate coefficient matrix $S^{(1)}$.

\noindent\textbf{2. S-step.} Solve for $S^{(2)}$ using
\begin{equation}\label{eq:S_fd_forward}
  \frac{S_{ij}^{(2)} - S_{ij}^{(1)}}{\Delta t} = - \langle \mathcal{A}_h(f^{(1)}), X_{i}^{n+1} V_{j}^{n} \rangle_{xv}^h - \frac{1}{\varepsilon}\langle \mathcal{L}_h^{n+1}(f^{(1)}), X_{i}^{n+1} V_{j}^{n} \rangle_{xv}^h,
\end{equation}
where $f^{(1)} = \sum_{i,j} X_{i}^{n+1} S_{ij}^{(1)} V_{j}^{n}$.
Note that this is a fully explicit step.

\noindent\textbf{3. L-step.} Define $L_{i}^{(2)} = \sum_{j=1}^r S_{ij}^{(2)} V_j^n$, and solve for $L_i^{n+1}$ using
\begin{equation}\label{eq:L_fd_imex}
  \frac{L_{i}^{n+1} - L_{i}^{(2)}}{\Delta t} = \langle \mathcal{A}_h(f^{(2)}), X_{i}^{n+1}\rangle_x^h + \frac{1}{\varepsilon}\langle \mathcal{L}_h^{n+1}(f^{n+1}), X_{i}^{n+1}\rangle_x^h,
\end{equation}
where $f^{(2)} = \sum_{i=1}^r X_{i}^{n+1} L_{i}^{(2)}$ and $f^{n+1}=\sum_{i=1}^r X_i^{n+1}L_i^{n+1}$. Computing $\{L_i^{n+1}\}_{i=1}^r$ requires solving a linear system of size $(rN_v)\times(rN_v)$, which is block-tridiagonal in the velocity grid with $r\times r$ blocks. A final QR factorization of $\{L^{n+1}_i\}_{i=1}^{r}$ yields the new velocity basis $\{V^{n+1}_j\}_{j=1}^{r}$ and coefficient matrix $S^{n+1}$.

\begin{algorithm}
\caption{First-order low-rank IMEX scheme}
\label{alg:first_order}
\begin{algorithmic}[1]
\Require Rank-$r$ factors $\{X^n_i\}_{i=1}^{r}, S^n, \{V^n_j\}_{j=1}^{r}$ at $t^n$.
\Ensure Rank-$r$ factors $\{X^{n+1}_i\}_{i=1}^{r}, S^{n+1}, \{V^{n+1}_j\}_{j=1}^{r}$ at $t^{n+1}$.

\AlgPhase{Step 1: Density prediction}
    \State Compute density $\rho^n = \langle f^n, 1 \rangle_v^h$ and the predicted density $\hat{\rho}^{n+1} = \rho^n + \Delta t \langle \mathcal{A}_h(f^n), 1 \rangle_v^h$.
    \State Solve the Poisson equation using $\hat{\rho}^{n+1}$ for $E^{n+1}$.
    \State Define the Fokker--Planck operator $\mathcal{L}_h^{n+1} := \mathcal{L}_h[E^{n+1}]$.

\AlgPhase{Step 2: Lie--Trotter projector-splitting evolution}
    \State \textbf{K-step}: Update $\{K_j\}_{j=1}^{r}$ via Eq.~\eqref{eq:K_fd_imex}.
    \State \phantom{\textbf{K-step}} QR factorization yields $\{X^{n+1}_i\}_{i=1}^{r}$ and $S^{(1)}$.
    \State \textbf{S-step}: Update $S$ from $S^{(1)}$ to $S^{(2)}$ via Eq.~\eqref{eq:S_fd_forward}.
    \State \textbf{L-step}: Update $\{L_i\}_{i=1}^{r}$ via Eq.~\eqref{eq:L_fd_imex}.
    \State \phantom{\textbf{L-step}} QR factorization yields $\{V^{n+1}_j\}_{j=1}^{r}$ and $S^{n+1}$.
\end{algorithmic}
\end{algorithm}

\subsection{A second-order scheme}
\label{sec:second_order_schemes}

To achieve higher temporal accuracy, we employ a particular second-order IMEX scheme used in \cite{filbet2010class}. This scheme is special in that its implicit tableau corresponds to the trapezoidal rule, which is required in the stability analysis established in
\cite{zhang2025stability} for applying DLR methods to stiff parabolic problems. 

In the full-tensor case, the scheme reads
\begin{align}
    \frac{f^{n+1/2} - f^n}{\Delta t/2} &= \mathcal{A}_h(f^n) + \frac{1}{\varepsilon}\mathcal{L}_h[E^{n+1/2}](f^{n+1/2}),\label{eq:stage_coupled} \\
    \frac{f^{n+1} - f^n}{\Delta t} &= \mathcal{A}_h(f^{n+1/2}) + \frac{1}{2\varepsilon}\Big( \mathcal{L}_h[E^n](f^n) + \mathcal{L}_h[E^{n+1}](f^{n+1}) \Big).\label{eq:step_coupled}
\end{align}
To resolve the coupling with the unknown fields $E^{n+1/2}$ and $E^{n+1}$, similar strategy in the first-order scheme applies. 
Taking the zeroth moment of \eqref{eq:stage_coupled} yields
\begin{equation}\label{eq:dens_pred}
  \rho^{n+1/2} = \rho^{n} + \frac{\Delta t}{2} \langle \mathcal{A}_h(f^n), 1 \rangle_v^h.
\end{equation}
We solve the Poisson equation using $\rho^{n+1/2}$ to obtain $E^{n+1/2}$, allowing us to solve \eqref{eq:stage_coupled} efficiently for $f^{n+1/2}$.
Similarly, taking the zeroth moment of \eqref{eq:step_coupled} yields 
\begin{equation}\label{eq:dens_corr}
  \rho^{n+1} = \rho^{n} + \Delta t \langle \mathcal{A}_h(f^{n+1/2}), 1 \rangle_v^h.
\end{equation}
With $\rho^{n+1}$ (and thus $E^{n+1}$) determined, we finally solve \eqref{eq:step_coupled} to obtain $f^{n+1}$.

\subsubsection{A second-order low-rank IMEX scheme}
\label{sec:lowrank_second_order}

In the low-rank framework, we first compute the predicted density at $t_{n+1/2}$:
\begin{equation}
    \hat{\rho}^{n+1/2} = \rho^n + \frac{\Delta t}{2} \langle \mathcal{A}_h(f^n), 1 \rangle_v^h.
\end{equation}
We then solve the Poisson equation using $\hat{\rho}^{n+1/2}$ to obtain $E^{n+1/2}$. To obtain $E^{n+1}$, we need $f^{n+1/2}$ which can be computed using the first-order low-rank scheme (Algorithm \ref{alg:first_order}) with step size $\Delta t/2$. Then 
\begin{equation}
    \hat{\rho}^{n+1} = \rho^n + \Delta t \langle \mathcal{A}_h(f^{n+1/2}), 1 \rangle_v^h.
\end{equation}
Solving the Poisson equation using $\hat{\rho}^{n+1}$ yields $E^{n+1}$.
With the field $E^{n+1/2}$ and $E^{n+1}$ known, the Fokker--Planck operator becomes $\mathcal{L}_h^{n+1/2}:=\mathcal{L}_h[E^{n+1/2}]$ and $\mathcal{L}_h^{n+1}:=\mathcal{L}_h[E^{n+1}]$.

We then apply the projector-splitting integrator to \eqref{eq:VPFP-1} using Strang splitting
\[\mathcal{K}(\tfrac{\Delta t}{2}) \circ \mathcal{S}(\tfrac{\Delta t}{2}) \circ \mathcal{L}(\Delta t) \circ \mathcal{S}(\tfrac{\Delta t}{2}) \circ \mathcal{K}(\tfrac{\Delta t}{2}).\]
In each substep, we employ the second-order IMEX scheme discussed above. The algorithm is summarized in Algorithm~\ref{alg:second_order}.


\medskip
\noindent\textbf{1. First K-step ($\Delta t/2$).}
Define $K^n_j = \sum_{i=1}^r X^n_i S^n_{ij}$, and solve for $K^{(1)}$ using
\begin{align}
    \frac{K^{n,\ast}_{j} - K^{n}_{j}}{\Delta t/4} &= \langle \mathcal{A}_h(f[K^{n}]),V^n_j\rangle_v^h + \frac{1}{\varepsilon}\langle \mathcal{L}_{h}^{n+1/2}(f[K^{n,\ast}]),V^n_j\rangle_v^h, \\
    \frac{K^{(1)}_{j} - K^{n}_{j}}{\Delta t/2} &= \langle \mathcal{A}_h(f[K^{n,\ast}]),V^n_j\rangle_v^h + \frac{1}{2\varepsilon}\langle \mathcal{L}_{h}^{n}(f[K^{n}]) + \mathcal{L}_{h}^{n+1}(f[K^{(1)}]),V^n_j\rangle_v^h.
\end{align}
Here, $f[K] = \sum_{\ell} K_\ell V^n_\ell$. A QR factorization of $\{K^{(1)}_j\}_{j=1}^{r}$ yields the intermediate spatial basis $\{X^{(1)}_i\}_{i=1}^{r}$ and coefficient matrix $S^{(1)}$.

\noindent\textbf{2. First S-step ($\Delta t/2$).}
Solve for $S^{(2)}$ using
\begin{align}
    \frac{S^{(1),\ast}_{ij} - S^{(1)}_{ij}}{\Delta t/4} &= - \big\langle \mathcal{A}_h(f[S^{(1)}]),X^{(1)}_i V^n_j \big\rangle_{xv}^h - \frac{1}{\varepsilon}\big\langle \mathcal{L}_{h}^{n+1/2}(f[S^{(1),\ast}]),X^{(1)}_i V^n_j \big\rangle_{xv}^h, \\
    \frac{S^{(2)}_{ij} - S^{(1)}_{ij}}{\Delta t/2} &= - \big\langle \mathcal{A}_h(f[S^{(1),\ast}]),X^{(1)}_i V^n_j \big\rangle_{xv}^h - \frac{1}{2\varepsilon}\big\langle \mathcal{L}_{h}^{n}(f[S^{(1)}]) + \mathcal{L}_{h}^{n+1}(f[S^{(2)}]),X^{(1)}_i V^n_j \big\rangle_{xv}^h.
\end{align}
Here, $f[S] = \sum_{k,\ell} X^{(1)}_k S_{k\ell} V^n_\ell$.

\noindent\textbf{3. L-step ($\Delta t$).}
Define $L^{(2)}_i = \sum_{j=1}^r S^{(2)}_{ij} V^n_j$, and solve for $L^{(3)}$ using
\begin{align}
    \frac{L^{(2),\ast}_{i} - L^{(2)}_{i}}{\Delta t/2} &= \langle \mathcal{A}_h(f[L^{(2)}]),X^{(1)}_i\rangle_x^h + \frac{1}{\varepsilon}\langle \mathcal{L}_{h}^{n+1/2}(f[L^{(2),\ast}]),X^{(1)}_i\rangle_x^h, \\
    \frac{L^{(3)}_{i} - L^{(2)}_{i}}{\Delta t} &= \langle \mathcal{A}_h(f[L^{(2),\ast}]),X^{(1)}_i\rangle_x^h + \frac{1}{2\varepsilon}\langle \mathcal{L}_{h}^{n}(f[L^{(2)}]) + \mathcal{L}_{h}^{n+1}(f[L^{(3)}]),X^{(1)}_i\rangle_x^h.
\end{align}
Here, $f[L] = \sum_{k} X^{(1)}_k L_k$. A QR factorization of $\{L^{(3)}_i\}_{i=1}^{r}$ yields the new velocity basis $\{V^{n+1}_j\}_{j=1}^{r}$ and coefficient matrix $S^{(3)}$.

\noindent\textbf{4. Second S-step ($\Delta t/2$).}
Solve for $S^{(4)}$ using
\begin{align}
    \frac{S^{(3),\ast}_{ij} - S^{(3)}_{ij}}{\Delta t/4} &= - \big\langle \mathcal{A}_h(f[S^{(3)}]),X^{(1)}_i V^{n+1}_j \big\rangle_{xv}^h - \frac{1}{\varepsilon}\big\langle \mathcal{L}_{h}^{n+1/2}(f[S^{(3),\ast}]),X^{(1)}_i V^{n+1}_j \big\rangle_{xv}^h, \\
    \frac{S^{(4)}_{ij} - S^{(3)}_{ij}}{\Delta t/2} &= - \big\langle \mathcal{A}_h(f[S^{(3),\ast}]),X^{(1)}_i V^{n+1}_j \big\rangle_{xv}^h - \frac{1}{2\varepsilon}\big\langle \mathcal{L}_{h}^{n}(f[S^{(3)}]) + \mathcal{L}_{h}^{n+1}(f[S^{(4)}]),X^{(1)}_i V^{n+1}_j \big\rangle_{xv}^h.
\end{align}
Here, $f[S] = \sum_{k,\ell} X^{(1)}_k S_{k\ell} V^{n+1}_\ell$.

\noindent\textbf{5. Second K-step ($\Delta t/2$).}
Define $K^{(4)}_j = \sum_{i=1}^r X^{(1)}_i S^{(4)}_{ij}$, and solve for $K^{n+1}$ using
\begin{align}
    \frac{K^{(4),\ast}_{j} - K^{(4)}_{j}}{\Delta t/4} &= \langle \mathcal{A}_h(f[K^{(4)}]),V^{n+1}_j\rangle_v^h + \frac{1}{\varepsilon}\langle \mathcal{L}_{h}^{n+1/2}(f[K^{(4),\ast}]),V^{n+1}_j\rangle_v^h, \\
    \frac{K^{n+1}_{j} - K^{(4)}_{j}}{\Delta t/2} &= \langle \mathcal{A}_h(f[K^{(4),\ast}]),V^{n+1}_j\rangle_v^h + \frac{1}{2\varepsilon}\langle \mathcal{L}_{h}^{n}(f[K^{(4)}]) + \mathcal{L}_{h}^{n+1}(f[K^{n+1}]),V^{n+1}_j\rangle_v^h.
\end{align}
Here, $f[K] = \sum_{\ell} K_\ell V^{n+1}_\ell$. A final QR factorization of $\{K^{n+1}_j\}_{j=1}^{r}$ yields the new spatial basis $\{X^{n+1}_i\}_{i=1}^{r}$ and coefficient matrix $S^{n+1}$.

\begin{algorithm}
\caption{Second-order low-rank IMEX scheme}
\label{alg:second_order}
\begin{algorithmic}[1]
\Require Rank-$r$ factors $\{X^n_i\}_{i=1}^{r}, S^n, \{V^n_j\}_{j=1}^{r}$ at $t^n$.
\Ensure Rank-$r$ factors $\{X^{n+1}_i\}_{i=1}^{r}, S^{n+1}, \{V^{n+1}_j\}_{j=1}^{r}$ at $t^{n+1}$.

\AlgPhase{Step 1: Density prediction}
    \State Compute density $\rho^n = \langle f^n, 1 \rangle_v^h$ and $\hat{\rho}^{n+1/2} = \rho^n + \frac{\Delta t}{2} \langle \mathcal{A}_h(f^n), 1 \rangle_v^h$.
    \State Solve the Poisson equation using $\hat{\rho}^{n+1/2}$ for $E^{n+1/2}$.
    \State Evolve $f^n \to f^{n+1/2}$ using Algorithm \ref{alg:first_order} for $\Delta t/2$ step.
    \State Compute $\hat{\rho}^{n+1} = \rho^n + \Delta t \langle \mathcal{A}_h(f^{n+1/2}), 1 \rangle_v^h$. 
    \State Solve the Poisson equation using $\hat{\rho}^{n+1}$ for $E^{n+1}$.
    \State Define $\mathcal{L}^n := \mathcal{L}[E^n]$, $\mathcal{L}^{n+1/2} := \mathcal{L}[E^{n+1/2}]$, $\mathcal{L}^{n+1} := \mathcal{L}[E^{n+1}]$.

\AlgPhase{Step 2: Strang projector-splitting evolution}
    \State \textbf{K-step} ($\Delta t/2$): Evolve $\{K_j\}_{j=1}^{r}$ via 2nd-order IMEX.
    \State \phantom{\textbf{1. K-step}} QR factorization yields $\{X^{(1)}_i\}_{i=1}^{r}$ and $S^{(1)}$.

    \State \textbf{S-step} ($\Delta t/2$): Evolve $S$ to obtain $S^{(2)}$ via 2nd-order IMEX.

    \State \textbf{L-step} ($\Delta t$): \phantom{/2} Evolve $\{L_i\}_{i=1}^{r}$ via 2nd-order IMEX.
    \State \phantom{\textbf{3. L-step}} QR factorization yields $\{V^{n+1}_j\}_{j=1}^{r}$ and $S^{(3)}$.

    \State \textbf{S-step} ($\Delta t/2$): Evolve $S$ to obtain $S^{(4)}$ via 2nd-order IMEX.

    \State \textbf{K-step} ($\Delta t/2$): Evolve $\{K_j\}_{j=1}^{r}$ via 2nd-order IMEX.
    \State \phantom{\textbf{5. K-step}} QR factorization yields $\{X^{n+1}_i\}_{i=1}^{r}$ and $S^{n+1}$.
\end{algorithmic}
\end{algorithm}

\section{Asymptotic--preserving (AP) analysis}
\label{sec:AP}

In this section, we analyze the asymptotic behavior of the proposed first-order low-rank IMEX scheme as $\varepsilon \to 0$. Our goal is to establish that the solution relaxes to the discrete Maxwellian determined by the macroscopic field, with a residual error controlled by $\varepsilon$ and spatial field fluctuations.

We focus our analysis on the first-order scheme because it employs backward Euler time integration for the Fokker--Planck operator during the K and L steps. In the stiff limit, backward Euler is $L$-stable and acts as a projection onto the local equilibrium manifold, yielding a strongly AP scheme. This allows us to prove a global relaxation property via coercivity estimates. 


\subsection{Notation and micro-macro decomposition}

We work in the 1D1V discrete setting and utilize the discrete norms and inner products defined in Section~\ref{sec:discretization}. Unless otherwise specified, operations involving the velocity inner product $\langle \cdot, \cdot \rangle_v^h$ are understood to map phase-space grid functions to spatial grid functions. Let $L_v = v_{\max} - v_{\min}$ denote the length of the velocity domain.

We assume without loss of generality that the discrete Maxwellian $M$ is normalized on the velocity grid:
\begin{equation}
    \langle M, 1 \rangle_v^h = \mathbf{1}.
\end{equation}
Under this normalization, the local equilibrium is given by $\rho M$, where the product implies row-wise scaling by the density vector.

To quantify the distance to equilibrium, we employ the standard micro-macro decomposition of the distribution function:
\begin{equation} \label{eq:micro_macro_def}
    f = \rho M + g.
\end{equation}
Here, $\rho M$ represents the projection of $f$ onto the kernel of the Fokker--Planck operator (the local equilibrium manifold), and $g$ represents the kinetic fluctuation. Due to the normalization of $M$ and definition of the macroscopic density $\rho = \langle f, 1 \rangle_v^h$, the fluctuation satisfies \modifyfirst{the zero velocity moment condition}:
\begin{equation}
    \langle g, 1 \rangle_v^h = \mathbf{0}.
\end{equation}
Our goal is to bound the norm of the fluctuation:
\begin{equation}
    \| f - \rho M \|_{xv}^h = \| g \|_{xv}^h.
\end{equation}

For the analysis, we fix the field $E^{n+1}$ and recall the space-dependent and velocity-dependent weights defined in \eqref{eq:ratio_plus}:
\begin{equation}\label{eq:AP_weights}
  \beta_p = \exp \Big(-\tfrac{\Delta v}{2}\,E^{n+1}_p\Big), \qquad \alpha_{q+1/2} = \exp \Big(\tfrac{\Delta v}{4}\,(v_q+v_{q+1})\Big).
\end{equation}
We assume we are in a regime where the spatial variation of the field is controlled. We denote the fluctuation of the spatial weights by $\delta \beta_p := \beta_p - \bar{\beta}$ relative to the spatial arithmetic mean $\bar{\beta}$, with $\|\delta \beta\|_\infty = \max_p |\beta_p - \bar{\beta}|$. This fluctuation serves as a proxy for the local rank of the Maxwellian; $\delta \beta \equiv 0$ corresponds to a spatially constant field where the Maxwellian is exactly separable (rank-1).

\subsection{Key operator properties}
\label{sec:aux_lemmas}
We establish the key spectral properties of the discrete operator required for the main theorem. To obtain a coercive estimate for the Fokker--Planck operator, it is necessary to work with the relative fluctuation $g/M$.

\begin{proposition}[Discrete coercivity]
\label{prop:coercivity}
Consider the decomposition $f = \rho M + g$ \modifyfirst{with $\rho=\langle f,1\rangle_v^h$, so that the fluctuation $g$ satisfies the zero velocity moment condition $\langle g, 1 \rangle_v^h = \mathbf{0}$}. There exists a constant $\gamma_h > 0$ such that the Fokker--Planck operator satisfies the coercivity estimate:
\begin{equation}
  -\big\langle \mathcal L_{h}(f),\, g/M \big\rangle_{xv}^h \;\ge\; \gamma_h\,\big\| g/M \big\|_{xv}^{h,2},
\end{equation}
where the constant $\gamma_h$ is defined as:
\begin{equation}
  \gamma_h:=\frac{M_{\min}\,\lambda_N}{\Big(1+\sqrt{{L_v\,M_{\max}}}\Big)^2}, \qquad \text{with} \quad \lambda_N=\frac{4}{\Delta v^2}\sin^2\!\Big(\frac{\pi}{2N_v}\Big),
\end{equation}
and the Maxwellian bounds
\[
  M_{\min}:=\min_{p,q} M_{p,q+1/2},
  \qquad
  M_{\max}:=\max\!\Big\{ \max_{p,q} M_{p,q},\, \max_{p,q} M_{p,q+1/2}\Big\}.
\]
\end{proposition}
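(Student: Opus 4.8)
The plan is to establish the coercivity estimate by working in the relative-fluctuation variable $w := g/M$ and exploiting the discrete integration-by-parts (summation-by-parts) structure of the conservative flux discretization \eqref{eq:LFP_semi_disc}--\eqref{eq:flux_half_def}. The key observation is that the geometric-mean flux form makes $\mathcal{L}_h$ self-adjoint with respect to a weighted inner product, which turns $-\langle \mathcal{L}_h(f), g/M\rangle$ into a manifestly nonnegative weighted Dirichlet energy. I would then bound that energy below using a discrete Poincar\'e (spectral gap) inequality in velocity, and finally convert the $M$-weighted norm back to the plain $\|g/M\|$ norm using pointwise bounds on $M$.

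First, I would compute the bilinear form explicitly. Since $\mathcal{L}_h(\rho M) = 0$ pointwise (the equilibrium lies in the kernel of the discrete operator, as the flux \eqref{eq:flux_half_def} vanishes when $f/M$ is constant in $v$), we have $\mathcal{L}_h(f) = \mathcal{L}_h(g)$, so it suffices to treat $\langle \mathcal{L}_h(g), g/M\rangle_{xv}^h$. Writing $w_{p,q} = (g/M)_{p,q}$ and performing discrete summation by parts in $q$ (the zero-flux boundary condition kills boundary terms), the flux form yields, for each fixed $p$,
\begin{equation}
  -\big\langle \mathcal{L}_h(g), g/M\big\rangle_{v}^h
  = \frac{1}{\Delta v}\sum_{q} M_{p,q+1/2}\,\big(w_{p,q+1}-w_{p,q}\big)^2 \;\ge\; 0.
\end{equation}
Summing over $p$ (with weight $\Delta x$) gives a nonnegative quadratic form in the discrete velocity-gradient of $w$, weighted by the half-point Maxwellian $M_{p,q+1/2}$. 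This is the structural heart of the argument and follows directly from the conservative geometric-mean construction.

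Next I would lower-bound this energy. I replace the weight $M_{p,q+1/2} \ge M_{\min}$ to extract a uniform constant, reducing the bound to $M_{\min}\,\frac{1}{\Delta v}\sum_{p,q}(w_{p,q+1}-w_{p,q})^2\,\Delta x$, i.e. $M_{\min}$ times the standard discrete $H^1$-seminorm of $w$ in velocity. The constraint $\langle g,1\rangle_v^h = 0$ translates into a weighted mean-zero condition on $w$ against $M$, which lets me apply a discrete Poincar\'e inequality: the graph Laplacian on $N_v$ points (with the relevant boundary condition) has smallest nonzero eigenvalue $\lambda_N = \frac{4}{\Delta v^2}\sin^2(\pi/(2N_v))$, yielding the velocity-seminorm lower bound in terms of $\lambda_N\|w - \bar w\|^2$ for the appropriate mean $\bar w$. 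I expect the main obstacle to be reconciling the $M$-weighted mean-zero constraint coming from $\langle g,1\rangle_v^h=0$ with the unweighted (or $M_{\max}$-weighted) mean used in the Poincar\'e inequality: one must control the gap between the two means, and this is exactly where the factor $(1+\sqrt{L_v M_{\max}})^2$ in $\gamma_h$ arises --- by a Cauchy--Schwarz estimate bounding the discrepancy $\|\bar w\|$ (projection onto constants) in terms of $L_v$, $M_{\max}$, and the seminorm itself.

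Finally, assembling these pieces gives $-\langle \mathcal{L}_h(f), g/M\rangle_{xv}^h \ge \gamma_h \|g/M\|_{xv}^{h,2}$ with $\gamma_h = M_{\min}\lambda_N/(1+\sqrt{L_v M_{\max}})^2$. I would carry out the steps in the order: (i) reduce to $g$ via the kernel property and establish the summation-by-parts identity; (ii) extract $M_{\min}$ and identify the velocity Laplacian; (iii) handle the constraint translation and apply the discrete Poincar\'e inequality, absorbing the mean-discrepancy into the stated constant. The one subtle point worth double-checking is that $\langle g,1\rangle_v^h=0$ does \emph{not} directly give $\langle w,1\rangle_v^h=0$, so the coercivity constant must genuinely pay for converting between the $M$-weighted and plain orthogonality --- this is the step I would write most carefully.
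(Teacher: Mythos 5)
Your proposal is correct and follows essentially the same route as the paper's proof: summation by parts to obtain the $M_{p,q+1/2}$-weighted Dirichlet energy, the lower bound $M_{p,q+1/2}\ge M_{\min}$, the discrete Poincar\'e inequality with eigenvalue $\lambda_N$, and a Cauchy--Schwarz estimate converting the $M$-weighted mean-zero constraint on $g/M$ into the arithmetic-mean deviation, which is exactly where the factor $\bigl(1+\sqrt{L_v M_{\max}}\bigr)^2$ arises. The subtle point you flag at the end --- that $\langle g,1\rangle_v^h=0$ gives $\langle g/M, M\rangle_v^h=0$ rather than unweighted orthogonality --- is precisely the step the paper handles via $\bar u = -\langle u-\bar u, M\rangle_v^h$.
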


\begin{proof}
Let $u = g/M$ denote the relative fluctuation grid function. Since $\mathcal{L}_{h}(\rho M) = 0$, the action of the operator is determined solely by $g$. We test the operator against $u$. Using the summation by parts, we obtain the global dissipation relation:
\[
\big\langle \mathcal L_{h}(f),\,u \big\rangle_{xv}^h
= -\,\frac{\Delta x}{\Delta v}\sum_{p=1}^{N_x}\sum_{q=1}^{N_v-1}
M_{p,q+1/2}\,\big( u_{p,q+1}-u_{p,q} \big)^2.
\]
We now seek a lower bound for the term on the right-hand side. Let $\mathcal{D}(u)$ be the spatial grid function representing the local dissipation:
\[
  [\mathcal{D}(u)]_p := \frac{1}{\Delta v} \sum_{q=1}^{N_v-1} (u_{p,q+1}-u_{p,q})^2.
\]
Using the bound $M_{p,q+1/2} \ge M_{\min}$, we have the pointwise lower bound proportional to $\mathcal{D}(u)$. We apply the discrete Poincaré inequality on the uniform velocity grid. Let $\bar{u}$ denote the spatial vector of arithmetic means (i.e., $[\bar{u}]_p = \frac{1}{N_v}\sum_q u_{p,q}$). Then pointwise:
\[
  \mathcal{D}(u) \ge \lambda_N \| u - \bar{u} \|_{v}^{h,2}.
\]
To close the estimate, we need to relate the deviation from the arithmetic mean to the full norm. From the zero-moment condition $\langle g,1\rangle_v^h=\mathbf{0}$, we have
\begin{equation}\label{eq:zero_weighted_mean}
    \langle u, M \rangle_v^h = \langle g/M, M \rangle_v^h = \langle g, 1 \rangle_v^h = \mathbf{0}.
\end{equation}
Using the decomposition $u = (u - \bar{u}) + \bar{u}$, the normalization $\langle 1, M \rangle_v^h = \mathbf{1}$ and \eqref{eq:zero_weighted_mean}, we obtain:
\[
  \mathbf{0} = \langle u - \bar{u}, M \rangle_v^h + \bar{u} \langle 1, M \rangle_v^h \implies \bar{u} = - \langle u - \bar{u}, M \rangle_v^h.
\]
Applying the Cauchy--Schwarz inequality yields:
\[
  \bar{u}^2 = |\langle u - \bar{u}, M \rangle_v^h|^2 \le \|u - \bar{u}\|_{v}^{h,2} \, \|M\|_{v}^{h,2} \le M_{\max} \|u - \bar{u}\|_{v}^{h,2},
\]
where the last inequality uses the normalization $\langle 1, M \rangle_v^h = \mathbf{1}$ to bound $\|M\|_{v}^{h,2} = \sum_q M_q^2 \Delta v \le M_{\max}\sum_q M_q \Delta v = M_{\max}$.  Since $\bar u_p$ is constant in $v$, $\|\bar{u}\|_{v}^{h,2} = \bar{u}^2 L_v \le L_v M_{\max}\|u-\bar u\|_v^{h,2}$.
Substituting this back into the triangle inequality for $\|u\|_v^h$:
\[
  \|u\|_{v}^{h,2} \le \Big( \|u-\bar{u}\|_v^h + \|\bar{u}\|_v^h \Big)^2 \le \Big(1 + \sqrt{L_v M_{\max}}\Big)^2 \|u-\bar{u}\|_{v}^{h,2}.
\]
Combining the local estimates and summing over all spatial nodes $p$ yields:
\[
  -\big\langle \mathcal L_{h}(f),\,u \big\rangle_{xv}^h \ge M_{\min} \lambda_N \sum_{p=1}^{N_x} \|u - \bar{u}\|_{v}^{h,2} \Delta x \ge \frac{M_{\min} \lambda_N}{(1+\sqrt{L_v M_{\max}})^2} \|u\|_{xv}^{h,2}.
\]
This completes the proof.
\end{proof}

\begin{proposition}[Fluctuation--based projection bound]
\label{prop:fluctuation_bound}
Let $P_X$ be the spatial projector onto the subspace $\mathrm{span}\{X_i\}$. Let $\bar{\beta}$ denote the arithmetic mean of $\beta_p$ over the spatial domain, and define the fluctuation $\delta \beta_p := \beta_p - \bar{\beta}$ and the inverse relative amplitude $C_\beta := \max_p (\beta_p \bar{\beta})^{-1}$.
Then, for any distribution $f$ in the range of $P_X$, the orthogonal projection of the Fokker--Planck operator is bounded by:
\begin{equation}
  \|(I-P_X)\mathcal L_{h}(f)\|_{xv}^h \le \kappa_h \|f\|_{xv}^h,
\end{equation}
where the fluctuation constant is defined by:
\begin{equation}\label{eq:kappa_h_def}
  \kappa_h \;:=\; \|\delta \beta\|_\infty \left( \frac{2\alpha_{\max}}{\Delta v^2} \;+\; C_\beta \frac{2}{\alpha_{\min}\Delta v^2} \right).
\end{equation}
\end{proposition}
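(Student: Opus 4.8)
The plan is to exploit the separable structure of the discrete stencil \eqref{eq:fp_discrete_separable}, in which the spatial dependence enters \emph{only} through the multiplicative weights $\beta_p$ and $\beta_p^{-1}$. Grouping the stencil by these weights, I would write, for any $f$ in the range of $P_X$,
\begin{equation*}
  \mathcal{L}_h(f) = \operatorname{diag}(\beta)\,\mathcal{B}_+(f) + \operatorname{diag}(\beta^{-1})\,\mathcal{B}_-(f),
\end{equation*}
where $\operatorname{diag}(\cdot)$ denotes row-wise scaling in the spatial index and $\mathcal{B}_\pm$ are the purely velocity operators
\begin{align*}
  [\mathcal{B}_+(f)]_{p,q} &= \tfrac{1}{\Delta v^2}\bigl(\alpha_{q+1/2}\,f_{p,q+1} - \alpha_{q-1/2}\,f_{p,q}\bigr), \\
  [\mathcal{B}_-(f)]_{p,q} &= \tfrac{1}{\Delta v^2}\bigl(\alpha_{q-1/2}^{-1}\,f_{p,q-1} - \alpha_{q+1/2}^{-1}\,f_{p,q}\bigr).
\end{align*}

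The crucial structural observation is that $P_X$ acts only on the spatial index while $\mathcal{B}_\pm$ act only on the velocity index, so they commute: if $f$ lies in the range of $P_X$, then each spatial slice of $\mathcal{B}_\pm(f)$ is a velocity-combination of slices of $f$ and hence also lies in $\operatorname{span}\{X_i\}$, giving $(I-P_X)\mathcal{B}_\pm(f)=0$. Next I would split the weights about their spatial means, $\beta = \bar{\beta}\mathbf{1} + \delta\beta$ and correspondingly $\beta^{-1} = \bar{\beta}^{-1}\mathbf{1} + \delta(\beta^{-1})$. The constant multiples $\bar{\beta}\,\mathcal{B}_+(f)$ and $\bar{\beta}^{-1}\mathcal{B}_-(f)$ remain in the range of $P_X$ and are annihilated by $I-P_X$, so only the fluctuation survives:
\begin{equation*}
  (I-P_X)\mathcal{L}_h(f) = (I-P_X)\bigl[\operatorname{diag}(\delta\beta)\,\mathcal{B}_+(f) + \operatorname{diag}(\delta(\beta^{-1}))\,\mathcal{B}_-(f)\bigr].
\end{equation*}
Using $\|I-P_X\|\le 1$ and the diagonal-scaling bound $\|\operatorname{diag}(w)g\|_{xv}^h \le \|w\|_\infty\,\|g\|_{xv}^h$, together with the elementary identity $\beta_p^{-1}-\bar{\beta}^{-1} = -\delta\beta_p/(\beta_p\bar{\beta})$ which yields $\|\delta(\beta^{-1})\|_\infty \le C_\beta\,\|\delta\beta\|_\infty$, reduces the claim to operator-norm bounds on $\mathcal{B}_\pm$.

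The last ingredient is to bound the velocity operators $\mathcal{B}_\pm$ in the discrete phase-space norm. Writing $\Delta v^2\,\mathcal{B}_+$ as the difference of a weighted shift and a weighted diagonal, I would bound each piece using $\alpha_{q\pm1/2}\le\alpha_{\max}$ and the fact that a shift preserves—or, at the zero-flux boundary, decreases—the velocity $\ell^2$ mass, giving $\|\mathcal{B}_+(f)\|_{xv}^h\le\tfrac{2\alpha_{\max}}{\Delta v^2}\|f\|_{xv}^h$; the analogous estimate with $\alpha_{q\pm1/2}^{-1}\le\alpha_{\min}^{-1}$ gives $\|\mathcal{B}_-(f)\|_{xv}^h\le\tfrac{2}{\alpha_{\min}\Delta v^2}\|f\|_{xv}^h$. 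Combining these with the fluctuation factors from the previous step reproduces exactly $\kappa_h$ as defined in \eqref{eq:kappa_h_def}. The main obstacle I anticipate is purely bookkeeping at the velocity boundaries: the zero-flux condition modifies the stencil at $q=1$ and $q=N_v$ (one flux term drops), so the shift appearing in $\mathcal{B}_\pm$ is a truncated rather than cyclic shift; I would verify that truncation only removes nonnegative contributions to the norm, so the stated constants remain valid.
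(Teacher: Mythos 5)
Your proposal is correct and follows essentially the same route as the paper's proof: decompose $\mathcal{L}_h$ into two tensor-product terms with spatial weights $\beta$ and $\beta^{-1}$, subtract the constants $\bar{\beta}$ and $\bar{\beta}^{-1}$ (which are annihilated by $I-P_X$ on the range of $P_X$), bound the residual diagonal scaling by $\|\delta\beta\|_\infty$ and $C_\beta\|\delta\beta\|_\infty$, and estimate the velocity operators by $2\alpha_{\max}/\Delta v^2$ and $2/(\alpha_{\min}\Delta v^2)$. Your extra care with the zero-flux truncation at the velocity boundaries is a minor refinement the paper glosses over, but the argument and constants are identical.
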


\begin{proof}
Recall that the separable stencil \eqref{eq:fp_discrete_separable} defines the operator $\mathcal{L}_{h}$. We observe that this operator acts as a sum of tensor products involving multiplication by spatial weights $\beta$ and $\beta^{-1}$:
\[
  \mathcal{L}_{h} \;=\; \beta \otimes \mathcal{T}^{(\alpha)} \;+\; \beta^{-1} \otimes \mathcal{T}^{(1/\alpha)}.
\]
The velocity difference operators $\mathcal{T}^{(\alpha)}$ and $\mathcal{T}^{(1/\alpha)}$ act on a velocity grid function $w$ as the two tensor factors arising directly from grouping~\eqref{eq:fp_discrete_separable} by powers of $\beta_p$:
\begin{align*}
    (\mathcal{T}^{(\alpha)} w)_q &= \frac{1}{\Delta v^2}\big(\alpha_{q+1/2}\,w_{q+1} - \alpha_{q-1/2}\,w_q\big), \\
    (\mathcal{T}^{(1/\alpha)} w)_q &= \frac{1}{\Delta v^2}\big(\alpha_{q-1/2}^{-1}\,w_{q-1} - \alpha_{q+1/2}^{-1}\,w_q\big).
\end{align*}
Since the projector $P_X$ acts only on the spatial coordinate, we can bound the two tensor terms separately.

Consider the first term involving $\beta$. We use the property that the scalar operator $\bar{\beta}I$ maps the subspace $\mathrm{ran}(P_X)$ into itself, and that $\mathcal{T}^{(\alpha)}$ acts only on velocity, hence preserves $\mathrm{ran}(P_X)$. Thus, for any $f \in \mathrm{ran}(P_X)$, $(I-P_X)(\bar{\beta}\,\mathcal{T}^{(\alpha)}f) = 0$. We write:
\[
  (I-P_X)\big(\beta\,\mathcal{T}^{(\alpha)}f\big) = (I-P_X)\big((\beta - \bar{\beta})\,\mathcal{T}^{(\alpha)}f\big).
\]
Since $\|I-P_X\|_{x}^{h} \le 1$, we bound the norm of the above:
\[
  \|(I-P_X)(\beta\,\mathcal{T}^{(\alpha)}f)\|_{xv}^h \;\le\; \|(\beta - \bar{\beta})\,\mathcal{T}^{(\alpha)}f\|_{xv}^h \;\le\; \|\delta \beta\|_\infty \,\|\mathcal{T}^{(\alpha)}\|_v^h\, \|f\|_{xv}^h.
\]

For the second term involving $\beta^{-1}$, we similarly choose the scalar constant $\bar{\beta}^{-1}$. The spatial coefficient variation is bounded algebraically:
\[
  |\beta_p^{-1} - \bar{\beta}^{-1}| \;=\; \left| \frac{\bar{\beta} - \beta_p}{\beta_p \bar{\beta}} \right| \;=\; \frac{1}{\beta_p \bar{\beta}} |\beta_p - \bar{\beta}| \;\le\; C_\beta \|\delta \beta\|_\infty.
\]
Thus, $\|(I-P_X)(\beta^{-1}\,\mathcal{T}^{(1/\alpha)}f)\|_{xv}^h \le C_\beta \|\delta \beta\|_\infty\,\|\mathcal{T}^{(1/\alpha)}\|_v^h\, \|f\|_{xv}^h$.

Finally, we bound the velocity operators. By applying the triangle inequality to the difference stencils defined above, we observe that each operator acts on neighboring grid points with weights bounded by $\alpha_{\max}$ (or $\alpha_{\min}^{-1}$). A direct estimation of the induced norms yields:
\[
  \|\mathcal{T}^{(\alpha)}\|_v^h \le \frac{2\alpha_{\max}}{\Delta v^2} \quad \text{and} \quad \|\mathcal{T}^{(1/\alpha)}\|_v^h \le \frac{2}{\alpha_{\min}\Delta v^2}.
\]
Combining these estimates with the spatial bounds yields the total constant $\kappa_h$ defined in \eqref{eq:kappa_h_def}.
\end{proof}

\subsection{Main results}

We now prove the AP property. We assume the formal asymptotic regime $0 < \varepsilon \ll \Delta t \ll 1$, wherein the numerical solution $f$ and its discrete derivatives are treated as bounded (smooth) $\mathcal{O}(1)$ quantities. \modifysecond{Throughout this analysis the spatial and velocity grids are fixed; consequently, constants such as $\gamma_h$ and $\kappa_h$ may depend on $\Delta x$ and $\Delta v$, but are required to be independent of $\varepsilon$.} Additionally, we require that the field fluctuation is small relative to the dissipation gap:

\begin{assumption}[Small--fluctuation regime]\label{asm:small_fluct}
We assume that the field fluctuation $\kappa_h$ (defined in Proposition~\ref{prop:fluctuation_bound}) satisfies
\begin{equation}\label{eq:regime_condition}
    \kappa_h \, M_{\max}^{n+1} \;<\; \gamma_h,
\end{equation}
where $\gamma_h$ is the coercivity constant from Proposition~\ref{prop:coercivity} and $M_{\max}^{n+1} := \max\!\big\{ \max_{p,q} M_{p,q}^{n+1},\, \max_{p,q} M_{p,q+1/2}^{n+1}\big\}$, an upper bound that controls both the cell-centered and half-grid Maxwellian values used in the proof.
\end{assumption}

\begin{theorem}[AP property up to spatial fluctuation]
\label{thm:main_AP}
Let $f^{n+1}$ be the low--rank solution obtained after the L--step update. Let $\rho^{n+1} = \langle f^{n+1}, 1 \rangle_v^h$ be the macroscopic density vector. Let $G^{n+1}$ denote the discrete residual defined by the time difference and advection operators (see \eqref{eq:G_derivation}). Under Assumption~\ref{asm:small_fluct} and the formal asymptotic regime assumptions (implying $\|G^{n+1}\|_{xv}^h = \mathcal{O}(1)$), let $\Theta := \gamma_h / (\gamma_h - \kappa_h M_{\max}^{n+1}) \ge 1$. Then, the Fokker--Planck residual satisfies
\begin{equation}\label{eq:residual_bound}
  \big\|\mathcal L_{h}^{n+1}(f^{n+1})\big\|_{xv}^h
  \;\le\;
  \Theta \Big( \varepsilon\,\|G^{n+1}\|_{xv}^h \;+\; \kappa_h\,M_{\max}^{n+1}\,\sqrt{L_v}\,\big\|\rho^{n+1}\big\|_x^h \Big),
\end{equation}
and 
\begin{equation}\label{eq:distance_bound}
  \big\|f^{n+1}-\rho^{n+1} M^{n+1}\big\|_{xv}^h
  \;\le\;
  \frac{M_{\max}^{n+1}}{\gamma_h}\,\Theta \Big( \varepsilon\,\|G^{n+1}\|_{xv}^h \;+\; \kappa_h\,M_{\max}^{n+1}\,\sqrt{L_v}\,\big\|\rho^{n+1}\big\|_x^h \Big).
\end{equation}
\end{theorem}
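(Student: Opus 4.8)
The plan is to exploit the backward-Euler treatment of the Fokker--Planck operator in the L-step to show that the \emph{projected} residual $P_X\mathcal L_{h}^{n+1}(f^{n+1})$ is exactly $\mathcal O(\varepsilon)$, to control the \emph{unprojected} part $(I-P_X)\mathcal L_{h}^{n+1}(f^{n+1})$ through the non-separability estimate of Proposition~\ref{prop:fluctuation_bound}, and to close the resulting self-referential inequality using the coercivity of Proposition~\ref{prop:coercivity}. First I would extract the implicit relation buried in the L-step update \eqref{eq:L_fd_imex}. Multiplying \eqref{eq:L_fd_imex} by $X_{i,p}^{n+1}$ and summing over $i$, the reconstruction map $g\mapsto\sum_i X_i^{n+1}\langle g,X_i^{n+1}\rangle_x^h$ is precisely the spatial projector $P_X$ onto $\mathrm{span}\{X_i^{n+1}\}$; since $f^{(2)}$ and $f^{n+1}$ both lie in $\mathrm{ran}(P_X)$, this yields $(f^{n+1}-f^{(2)})/\Delta t = P_X\mathcal A_h(f^{(2)}) + \varepsilon^{-1}P_X\mathcal L_{h}^{n+1}(f^{n+1})$. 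Defining
\begin{equation}\label{eq:G_derivation}
  G^{n+1} := \frac{f^{n+1}-f^{(2)}}{\Delta t} - P_X\mathcal A_h(f^{(2)}),
\end{equation}
I obtain the key identity $P_X\mathcal L_{h}^{n+1}(f^{n+1}) = \varepsilon\,G^{n+1}$, so the projected Fokker--Planck residual is $\mathcal O(\varepsilon)$ whenever $\|G^{n+1}\|_{xv}^h=\mathcal O(1)$.

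Next I would split $\mathcal L_{h}^{n+1}(f^{n+1}) = P_X\mathcal L_{h}^{n+1}(f^{n+1}) + (I-P_X)\mathcal L_{h}^{n+1}(f^{n+1})$. The first piece is bounded by $\varepsilon\|G^{n+1}\|_{xv}^h$ via the key identity. For the second, since $f^{n+1}\in\mathrm{ran}(P_X)$, Proposition~\ref{prop:fluctuation_bound} gives $\|(I-P_X)\mathcal L_{h}^{n+1}(f^{n+1})\|_{xv}^h\le\kappa_h\|f^{n+1}\|_{xv}^h$. The difficulty is that $\|f^{n+1}\|_{xv}^h$ is $\mathcal O(1)$, not small, so I would separate its macroscopic and kinetic content through the micro--macro decomposition $f^{n+1}=\rho^{n+1}M^{n+1}+g^{n+1}$ and the triangle inequality. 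The Maxwellian part is estimated by $\|\rho^{n+1}M^{n+1}\|_{xv}^h\le M_{\max}^{n+1}\sqrt{L_v}\,\|\rho^{n+1}\|_x^h$ (from the pointwise bound $M^{n+1}\le M_{\max}^{n+1}$ and $N_v\Delta v=L_v$), which supplies the second term in \eqref{eq:residual_bound}; the fluctuation part $\|g^{n+1}\|_{xv}^h$ is the dangerous one, since it is itself slaved to the quantity being bounded.

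The coercivity estimate then closes the loop. Using $\mathcal L_{h}^{n+1}(\rho^{n+1}M^{n+1})=0$, testing against $g^{n+1}/M^{n+1}$ and applying Cauchy--Schwarz to Proposition~\ref{prop:coercivity} give $\gamma_h\|g^{n+1}/M^{n+1}\|_{xv}^h\le\|\mathcal L_{h}^{n+1}(f^{n+1})\|_{xv}^h$, and hence $\|g^{n+1}\|_{xv}^h\le M_{\max}^{n+1}\|g^{n+1}/M^{n+1}\|_{xv}^h\le(M_{\max}^{n+1}/\gamma_h)\,\|\mathcal L_{h}^{n+1}(f^{n+1})\|_{xv}^h$. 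Substituting this back produces the self-referential inequality $\|\mathcal L_{h}^{n+1}(f^{n+1})\|_{xv}^h\le\varepsilon\|G^{n+1}\|_{xv}^h+\kappa_h M_{\max}^{n+1}\sqrt{L_v}\|\rho^{n+1}\|_x^h+(\kappa_h M_{\max}^{n+1}/\gamma_h)\|\mathcal L_{h}^{n+1}(f^{n+1})\|_{xv}^h$. Under Assumption~\ref{asm:small_fluct} the coefficient $1-\kappa_h M_{\max}^{n+1}/\gamma_h=\Theta^{-1}$ is strictly positive, so moving the last term to the left and multiplying by $\Theta$ yields exactly \eqref{eq:residual_bound}. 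Finally \eqref{eq:distance_bound} follows immediately by inserting \eqref{eq:residual_bound} into the coercivity-derived bound $\|f^{n+1}-\rho^{n+1}M^{n+1}\|_{xv}^h=\|g^{n+1}\|_{xv}^h\le(M_{\max}^{n+1}/\gamma_h)\|\mathcal L_{h}^{n+1}(f^{n+1})\|_{xv}^h$.

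I expect the self-referential structure to be the main obstacle: the fluctuation estimate of Proposition~\ref{prop:fluctuation_bound} cannot, on its own, distinguish the genuinely $\mathcal O(1)$ macroscopic content $\rho^{n+1}M^{n+1}$ from the small kinetic fluctuation $g^{n+1}$, so a naive application would destroy the $\varepsilon$-smallness. The micro--macro splitting isolates the macroscopic contribution, while coercivity lets the fluctuation term be absorbed back into the left-hand side---an absorption that is legitimate precisely because the small-fluctuation Assumption~\ref{asm:small_fluct}, $\kappa_h M_{\max}^{n+1}<\gamma_h$, guarantees $\Theta>0$.
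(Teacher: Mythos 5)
Your proposal is correct and follows essentially the same route as the paper's proof: the same key identity $P_X^{n+1}\mathcal L_h^{n+1}(f^{n+1})=\varepsilon G^{n+1}$ extracted from the L-step, the same split into projected and orthogonal parts with Proposition~\ref{prop:fluctuation_bound}, the same micro--macro decomposition, and the same absorption via Proposition~\ref{prop:coercivity} under Assumption~\ref{asm:small_fluct} (you absorb at the level of $\|\mathcal L_h^{n+1}(f^{n+1})\|_{xv}^h$ rather than $\|g^{n+1}/M^{n+1}\|_{xv}^h$, which is equivalent). The only cosmetic discrepancy is the sign of the advection term in your definition of $G^{n+1}$, which matches \eqref{eq:L_fd_imex} as written and is immaterial to the norm bounds.
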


\begin{proof}
We focus our analysis on the L-step because it determines the final state $f^{n+1}$. Recall the L-step update \eqref{eq:L_fd_imex} projected onto the updated spatial basis ${X}_i^{n+1}$:
\[
  \frac{L_i^{n+1} - L_i^{(2)}}{\Delta t} = \langle \mathcal{A}_h(f^{(2)}), {X}_i^{n+1} \rangle_x^h + \frac{1}{\varepsilon} \langle \mathcal{L}_{h}^{n+1}(f^{n+1}), {X}_i^{n+1} \rangle_x^h.
\]
Multiplying by ${X}_i^{n+1}$ and summing over $i$, we reconstruct the full distribution update projected onto the spatial subspace. Let $P_X^{n+1}$ be the orthogonal projector onto $\mathrm{span}\{{X}_i^{n+1}\}$. Rearranging the update equation to isolate the Fokker--Planck term yields:
\begin{equation}\label{eq:G_derivation}
  P_X^{n+1}\mathcal{L}_{h}^{n+1}(f^{n+1}) = \varepsilon \underbrace{\left[ \frac{f^{n+1} - f^{(2)}}{\Delta t} - P_X^{n+1}\mathcal{A}_h(f^{(2)}) \right]}_{:= G^{n+1}}.
\end{equation}
In the assumed asymptotic regime, the term $G^{n+1}$ depends only on bounded transport and time-difference operators acting on the smooth solution; thus $\|G^{n+1}\|_{xv}^h = \mathcal{O}(1)$ as $\varepsilon \to 0$.

We now decompose the Fokker--Planck operator image into projected and orthogonal components:
\[
  \big\|\mathcal L_{h}^{n+1}(f^{n+1})\big\|_{xv}^h \le \big\|P_X^{n+1}\mathcal L_{h}^{n+1}(f^{n+1})\big\|_{xv}^h + \big\|(I-P_X^{n+1})\mathcal L_{h}^{n+1}(f^{n+1})\big\|_{xv}^h.
\]
From \eqref{eq:G_derivation}, the projected component is exactly $\varepsilon \|G^{n+1}\|_{xv}^h$. For the orthogonal component, since $f^{n+1}$ lies in the range of $P_X^{n+1}$, Proposition~\ref{prop:fluctuation_bound} bounds the orthogonal component by $\kappa_h \|f^{n+1}\|_{xv}^h$.

To utilize the coercivity estimate, we use the micro--macro decomposition $f^{n+1} = \rho^{n+1} M^{n+1} + g^{n+1}$. We bound the state norm by
\[
  \|f^{n+1}\|_{xv}^h \le \|\rho^{n+1}M^{n+1}\|_{xv}^h + \|g^{n+1}\|_{xv}^h \le M_{\max}^{n+1}\sqrt{L_v}\|\rho^{n+1}\|_x^h + \|g^{n+1}\|_{xv}^h.
\]

We now test the operator against the relative fluctuation $g^{n+1}/M^{n+1}$. Proposition~\ref{prop:coercivity} provides the lower bound $-\langle \mathcal L_{h}^{n+1}(f^{n+1}),\,g^{n+1}/M^{n+1} \rangle_{xv}^h \ge \gamma_h\,\|g^{n+1}/M^{n+1}\|_{xv}^{h,2}$. Conversely, decomposing $\mathcal L_{h}^{n+1}$ into $\varepsilon G^{n+1} + (I-P_X^{n+1})\mathcal L_{h}^{n+1}$ and testing against $g^{n+1}/M^{n+1}$ yields an upper bound:
\begin{align*}
  -\big\langle \mathcal L_{h}^{n+1}(f^{n+1}),\,g^{n+1}/M^{n+1} \big\rangle_{xv}^h
  &\le \big( \varepsilon \|G^{n+1}\|_{xv}^h + \kappa_h \|f^{n+1}\|_{xv}^h \big) \|g^{n+1}/M^{n+1}\|_{xv}^h \\
  &\le \Big( \varepsilon \|G^{n+1}\|_{xv}^h + \kappa_h \big[ M_{\max}^{n+1}\sqrt{L_v}\|\rho^{n+1}\|_x^h + \|g^{n+1}\|_{xv}^h \big] \Big) \|g^{n+1}/M^{n+1}\|_{xv}^h.
\end{align*}
Using the inequality $\|g^{n+1}\|_{xv}^h = \|M^{n+1} (g^{n+1}/M^{n+1})\|_{xv}^h \le M_{\max}^{n+1} \|g^{n+1}/M^{n+1}\|_{xv}^h$, we can absorb the $\|g^{n+1}\|$ term. Assuming $\|g^{n+1}/M^{n+1}\|_{xv}^h > 0$, we divide by $\|g^{n+1}/M^{n+1}\|_{xv}^h$ and rearrange terms:
\[
  (\gamma_h - \kappa_h M_{\max}^{n+1}) \|g^{n+1}/M^{n+1}\|_{xv}^h \le \varepsilon \|G^{n+1}\|_{xv}^h + \kappa_h M_{\max}^{n+1} \sqrt{L_v} \|\rho^{n+1}\|_x^h.
\]
Identifying the amplification factor $\Theta = \gamma_h / (\gamma_h - \kappa_h M_{\max}^{n+1})$, we obtain the bound for $\|g^{n+1}/M^{n+1}\|_{xv}^h$. The distance to equilibrium is simply $\|f^{n+1}-\rho^{n+1}M^{n+1}\|_{xv}^h = \|g^{n+1}\|_{xv}^h \le M_{\max}^{n+1}\|g^{n+1}/M^{n+1}\|_{xv}^h$, which yields \eqref{eq:distance_bound}. The residual bound \eqref{eq:residual_bound} follows by substituting this back into the norm decomposition of $\mathcal{L}_{h}$.
\end{proof}

\begin{remark}
If the electric field is spatially constant (even if large), we may choose $\bar{\beta}=\beta_p$, implying $\delta \beta \equiv 0$ and $\kappa_h = 0$. In this case, the error is purely $\mathcal{O}(\varepsilon)$, recovering the strong AP property.  More generally, if the field fluctuation scales as $\kappa_h = \mathcal{O}(\varepsilon)$ for bounded density at the fixed mesh, then both terms in~\eqref{eq:distance_bound} are $\mathcal{O}(\varepsilon)$ and the strong AP property is recovered.  For a general $\kappa_h$, Theorem~\ref{thm:main_AP} gives a relaxation estimate of order $\mathcal{O}(\varepsilon)+\mathcal{O}(\kappa_h)$.
\end{remark}

\begin{remark}
The analysis in this section assumes the electric field $E^{n+1}$ is given and fixed during the kinetic update. In the full VPFP system, $E$ is coupled to $\rho$ via the Poisson equation. A complete AP analysis of the fully nonlinear coupled system would require accounting for errors in the macroscopic field solver, which is beyond the scope of this work.
\end{remark}

\begin{remark}
Although Theorem \ref{thm:main_AP} only proves AP property up to the spatial fluctuation  $\mathcal{O}(\kappa_h)$, we observe in the numerical experiments that the first-order scheme exhibits an AP error of $\mathcal{O}(\varepsilon)$, regardless of the field fluctuation.

\end{remark}

\section{Numerical experiments}
\label{sec:numerical_experiments}
We validate the accuracy, efficiency, and AP property of the proposed first-order and second-order low-rank IMEX schemes. All numerical tests are performed in a 1D1V phase space with periodic boundary condition in $x$ and zero flux boundary condition in the truncated velocity domain $[-6,6]$.


\subsection{Convergence and AP tests}
\label{sec:conv_ap_tests}

We consider two types of initial data: 

\medskip\noindent
\emph{Non-equilibrium initial data (used for the first-order scheme).}
We take a shifted Gaussian in $v$ with a spatially varying density,
\[
  f_{\mathrm{neq}}^0(x,v)
  =\frac{\rho^0(x)}{\sqrt{2\pi}}\exp\!\Big(-\frac{(v-1.5)^2}{2}\Big),
  \qquad
  \rho^0(x)=\sqrt{2\pi}\,\big(2+\cos(2\pi x)\big),\quad x\in[0,1],
\]
and the background charge
\[
  \eta(x)=\frac{2\sqrt{2\pi}}{I_0(1)}\,e^{\cos(2\pi x)},
\]
so that the neutrality condition $\int_0^1(\rho^0-\eta)\,\dx=0$ holds. Here $I_0$ denotes the modified Bessel function of the first kind of order zero, and $I_0(1)\approx 1.2661$. 
The initial electric field $E^0$ is obtained from the Poisson equation
\[
  -\partial_{xx}\phi^0=\rho^0(x)-\eta(x),\qquad E^0(x)=-\partial_x\phi^0.
\]

\medskip\noindent
\emph{Equilibrium initial data (used for the second-order scheme).}
For the second-order scheme, the AP property in the full-tensor framework is guaranteed when the initial data is close to the equilibrium.
Accordingly, for the second-order low-rank scheme, we consider equilibrium initial data of the form
\[
  f_{\mathrm{eq}}^0(x,v) = \frac{\rho^0(x)}{\sqrt{2\pi}}\exp\!\Big(-\frac{(v-E^0(x))^2}{2}\Big),
\]
where $\rho^0(x)$ and $E^0(x)$ are the same as above.


\medskip
Let $\|\cdot\|_{L^1_{x,v}}$ denote the discrete $L^1$ norm in phase space induced by the $(x,v)$ grid and $\|\cdot\|_{L^1_v}$ the corresponding $L^1$ norm in $v$ at fixed $x$.
To measure the convergence rate in time, we consider a sequence of time steps
\[
  \Delta t_0 > \Delta t_1 > \cdots > \Delta t_K,
  \qquad \Delta t_{k+1} = \frac{\Delta t_k}{2},
\]
and for each $k$ we evolve the scheme up to a fixed final time $T$ with time step $\Delta t_k$.
The time-discretization error at level $k$ is defined by \modifysecond{the difference, at the same final time $T$, between two complete simulations carried out independently with two successive time-step sizes $\Delta t_k$ and $\Delta t_{k+1}=\Delta t_k/2$,}
\begin{equation}\label{eq:dt_error_successive}
  \mathcal E_{\Delta t_k}(T)
  := \big\| f^{\Delta t_k}(T) - f^{\Delta t_{k+1}}(T)\big\|_{L^1_{x,v}}.
\end{equation}
If the scheme is $p$th-order accurate in time, then for small $\Delta t_k$ one expects $\mathcal E_{\Delta t_k}(T)\approx C\,(\Delta t_k)^p$, so the slope of $\mathcal E_{\Delta t_k}(T)$ versus $\Delta t_k$ on a log--log plot reflects the observed convergence rate $p$.

To assess AP property, we quantify the deviation of $f$ from its local Maxwellian $\rho(x,t)M(x,v,t)$ and monitor the global discrete $L^1_{x,v}$ error
\begin{equation}\label{eq:AP_error_global}
  \mathcal E_{AP}(t)
  := \big\| f(\cdot,\cdot,t) - \rho(\cdot,t)M(\cdot,\cdot,t)\big\|_{L^1_{x,v}}.
\end{equation}

\paragraph{First-order scheme.}
For the first-order low-rank IMEX scheme, we use $(N_x,N_v)=(64,64)$ and rank $r=10$.
We consider both the kinetic regime $\varepsilon=1$ and the high-field regime $\varepsilon=10^{-6}$. 
For the time-step convergence study we take
\[
  \Delta t_0 = 10^{-3},\quad
  \Delta t_{k+1} = \frac{\Delta t_k}{2},\quad k=0,\dots,4,
  \qquad T_{\max}=5\times 10^{-3}.
\]
Figure~\ref{fig:dt-ap-first}(a) shows $\mathcal E_{\Delta t_k}(T_{\max})$ versus $\Delta t_k$ on a log--log scale. 
The observed slopes are close to one for both $\varepsilon=1$ and $\varepsilon=10^{-6}$, indicating first-order convergence in time that is essentially uniform with respect to $\varepsilon$.

For the AP test we fix $\Delta t=2.5\times 10^{-3}$, $T_{\max}=0.1$, and $r=10$, and monitor the global AP error $\mathcal E_{AP}(t)$ defined in \eqref{eq:AP_error_global}.
As shown in Figure~\ref{fig:dt-ap-first}(b), the error decays rapidly over a few time steps and then saturates at a level proportional to $\varepsilon$.
This behavior is consistent with the AP property of the first-order full-tensor scheme, indicating that the low-rank formulation retains robustness to non-equilibrium initial data.

\paragraph{Second-order scheme.}
For the second-order low-rank IMEX scheme, we again use $(N_x,N_v)=(64,64)$.
When $\varepsilon=O(1)$, there is no guarantee that the solution is close to an equilibrium manifold. To separate time-discretization error from rank-truncation error in the convergence study, we therefore use a slightly larger rank $r=15$ for $\varepsilon=1$.
When $\varepsilon$ is very small, the solution is expected to remain close to a local Maxwellian; in this case we choose a smaller rank $r=6$ for $\varepsilon=10^{-6}$ to illustrate the efficiency of the low-rank approach in the fluid regime. The time-step convergence study uses
\[
  \Delta t_0 = 5\times 10^{-5},\quad
  \Delta t_{k+1} = \frac{\Delta t_k}{2},\quad k=0,\dots,4,
  \qquad T_{\max}=5\times 10^{-4}.
\]
Figure~\ref{fig:dt-ap-first}(c) reports $\mathcal E_{\Delta t_k}(T_{\max})$ versus $\Delta t_k$. 
The measured slopes are close to two for both values of $\varepsilon$, confirming second-order convergence in time and suggesting that the order is essentially uniform in the kinetic and fluid regimes.

For the AP test we fix $\Delta t=5\times 10^{-5}$ and $T_{\max}=2\times 10^{-3}$, and monitor $\mathcal E_{AP}(t)$.
The results in Figure~\ref{fig:dt-ap-first}(d) demonstrate rapid relaxation towards the local Maxwellian with a plateau at a level proportional to $\varepsilon$, consistent with the AP theory for the second-order full-tensor scheme. 

\begin{figure}[htbp]
  \centering
  \begin{subfigure}[t]{0.48\textwidth}
    \centering
    \includegraphics[width=\linewidth]{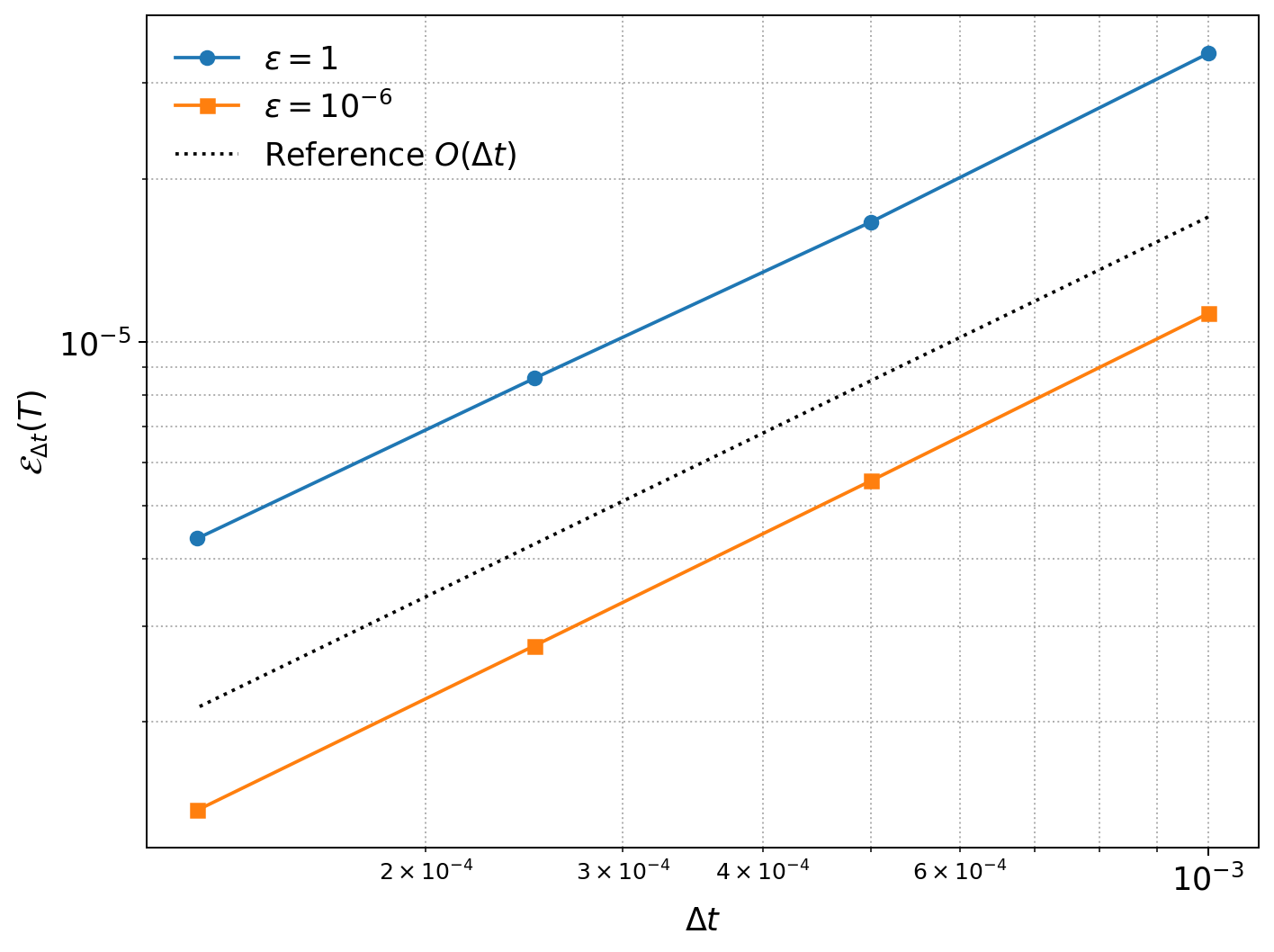}
    \caption{First-order: convergence of $f$ vs.\ $\Delta t$ for $\varepsilon=1$ and $\varepsilon=10^{-6}$.}
    \label{fig:dt-conv-first}
  \end{subfigure}\hfill
  \begin{subfigure}[t]{0.48\textwidth}
    \centering
    \includegraphics[width=\linewidth]{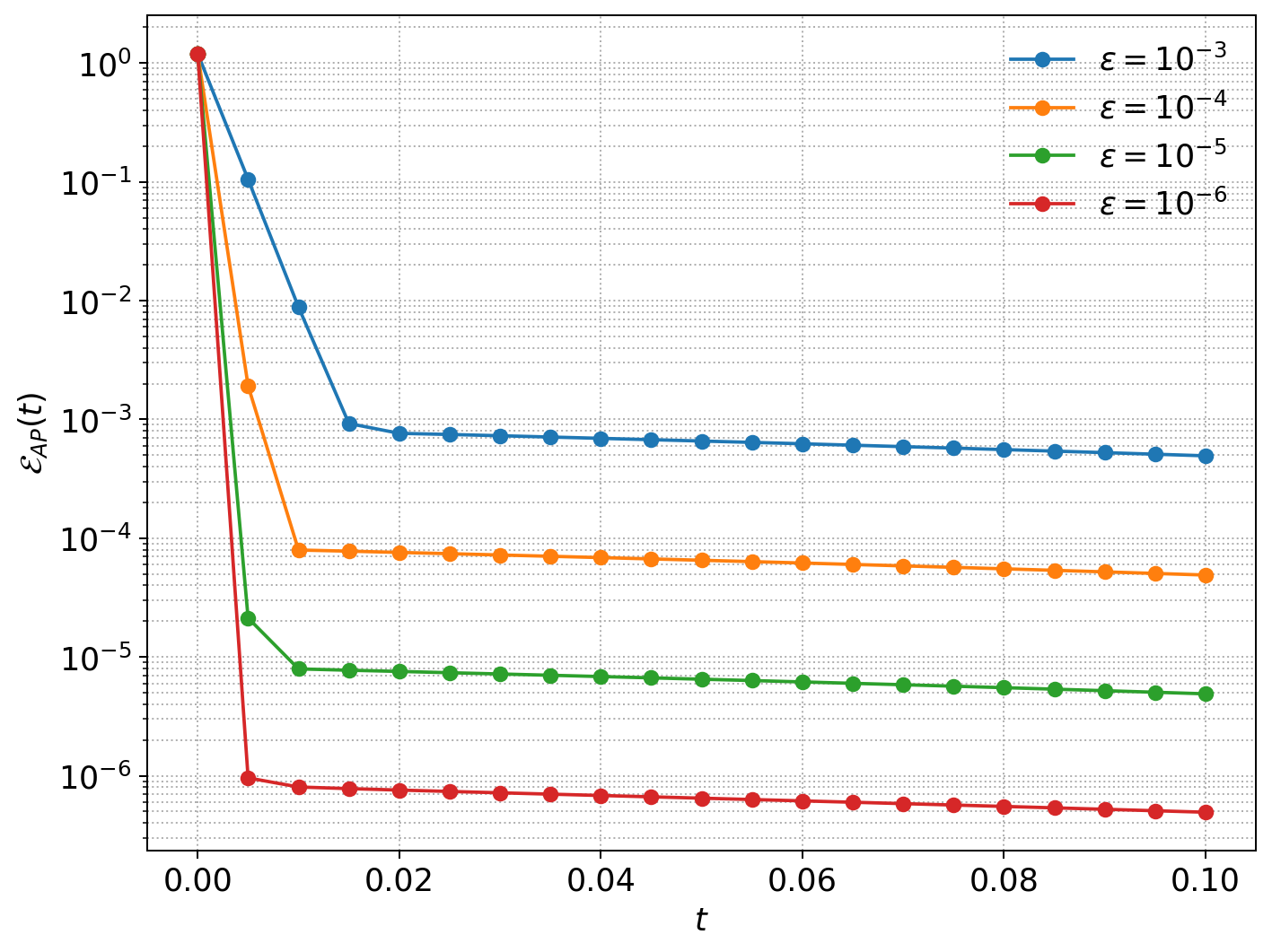}
    \caption{First-order: global AP error $\mathcal E_{AP}(t)$ over time.}
    \label{fig:ap-error-first}
  \end{subfigure}

  \vspace{0.6em}

  \begin{subfigure}[t]{0.48\textwidth}
    \centering
    \includegraphics[width=\linewidth]{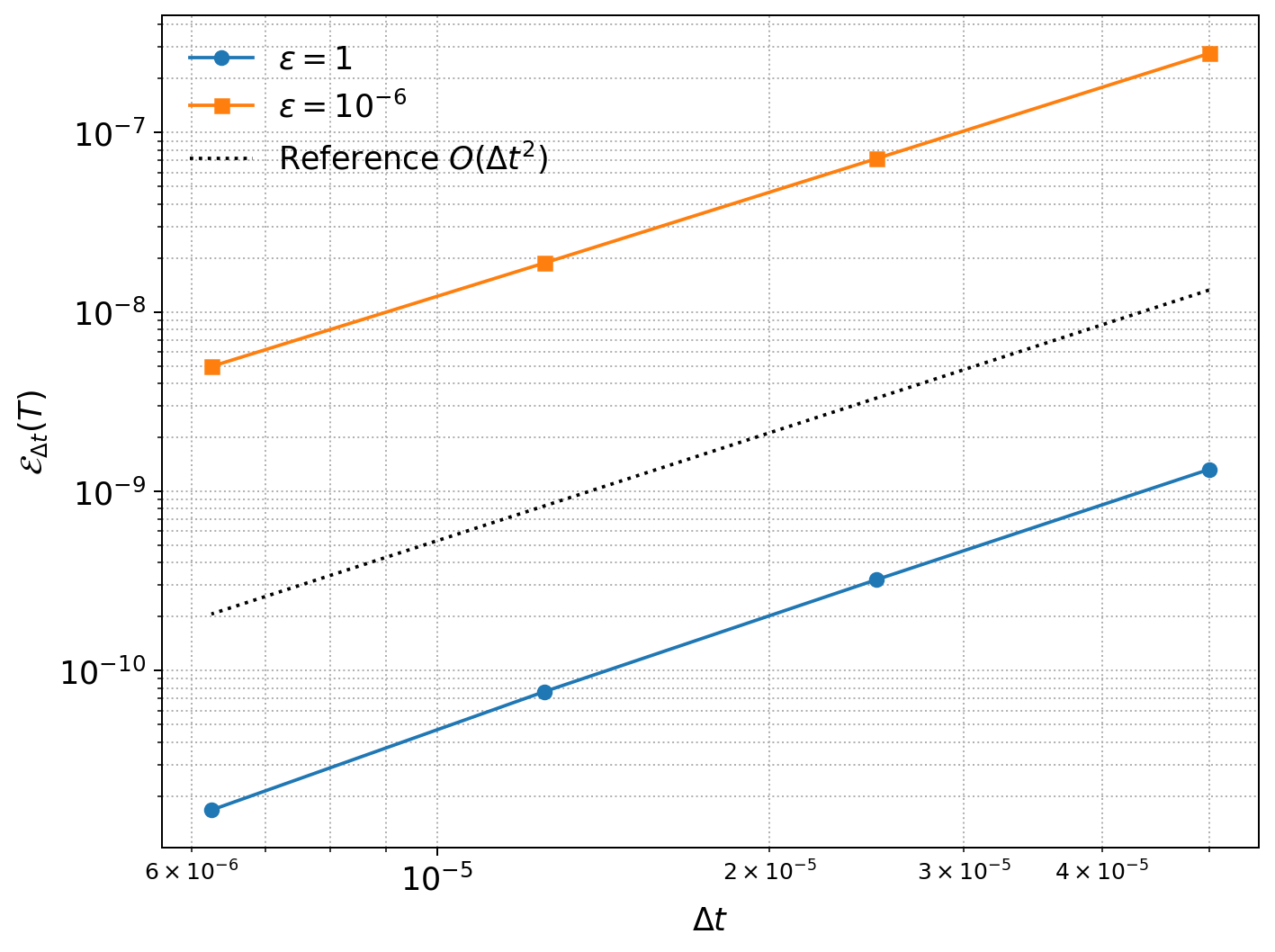}
    \caption{Second-order: convergence of $f$ vs.\ $\Delta t$ for $\varepsilon=1$ and $\varepsilon=10^{-6}$.}
    \label{fig:dt-conv-second}
  \end{subfigure}\hfill
  \begin{subfigure}[t]{0.48\textwidth}
    \centering
    \includegraphics[width=\linewidth]{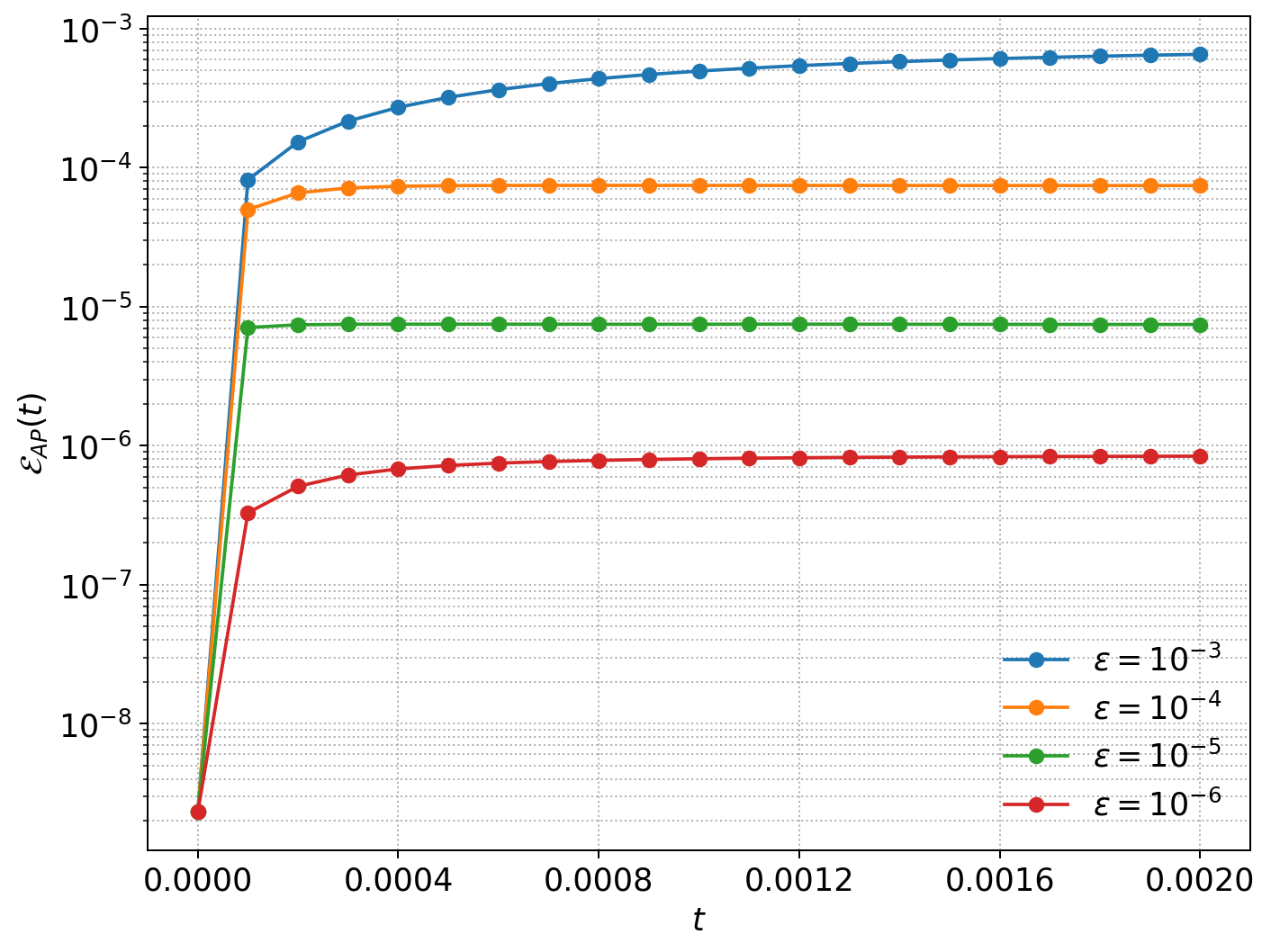}
    \caption{Second-order: global AP error $\mathcal E_{AP}(t)$ over time.}
    \label{fig:ap-error-second}
  \end{subfigure}
  \caption{Time-step convergence and AP property for the first-order (top) and second-order (bottom) low-rank IMEX schemes.}
  \label{fig:dt-ap-first}
\end{figure}

\subsection{Mixed regime}
\label{sec:mixed_regime}

In the rest of the tests, we use the first-order low-rank IMEX scheme.  

We prescribe a spatially varying Knudsen number
\[
\varepsilon(x)=
\begin{cases}
\varepsilon_0+\tfrac12\big(\tanh(5-10x)+\tanh(5+10x)\big), & x\le 0.3,\\[2pt]
\varepsilon_0, & x>0.3,
\end{cases}
\qquad \varepsilon_0=10^{-3},\quad x\in[-1,1],
\]
so that the left part of the domain has $\varepsilon(x)$ of order one, while the right part has a smaller, nearly constant value $\varepsilon(x)\approx \varepsilon_0$. 
The initial density and distribution are
\[
\rho^0(x)=\frac{\sqrt{2\pi}}{6}\big(2+\sin(\pi x)\big),\qquad
f^0(x,v)=\frac{\rho^0(x)}{\sqrt{2\pi}}\exp\!\Big(-\frac{(v-E^0(x))^2}{2}\Big),
\]
with background charge
\[
  \eta(x) = C_\eta\,e^{\cos(\pi x)},
\]
where the constant $C_\eta$ is chosen so that the neutrality condition
\[
  \int_{-1}^1\big(\eta(x)-\rho^0(x)\big)\,\dx = 0
\]
holds (numerically, $C_\eta \approx 0.6599$).
Thus $f^0$ is a local Maxwellian with density $\rho^0(x)$ and field $E^0(x)$. 
We use $N_x=100$,  $N_v=128$, a time step $\Delta t=10^{-4}$, and a moderate fixed rank $r = 13$ for the low-rank approximation.

Figure~\ref{fig:mixed-density-efield} illustrates the density and electric field at $t=0.10$ and $t=0.30$, demonstrating that the low-rank scheme accurately reproduces the macroscopic evolution in both regions. Figure~\ref{fig:mixed-phases} compares the phase portraits of the full-tensor reference solution and the low-rank solution, alongside their absolute difference at the final time. The low-rank scheme exhibits excellent agreement with the full-tensor benchmark. These results confirm that a moderate rank is sufficient to effectively capture the dominant phase-space structures in both the order-one and small-$\varepsilon(x)$ subdomains.
\begin{figure}[htbp]
  \centering
  \begin{subfigure}[t]{0.48\textwidth}
    \centering
    \includegraphics[width=\linewidth]{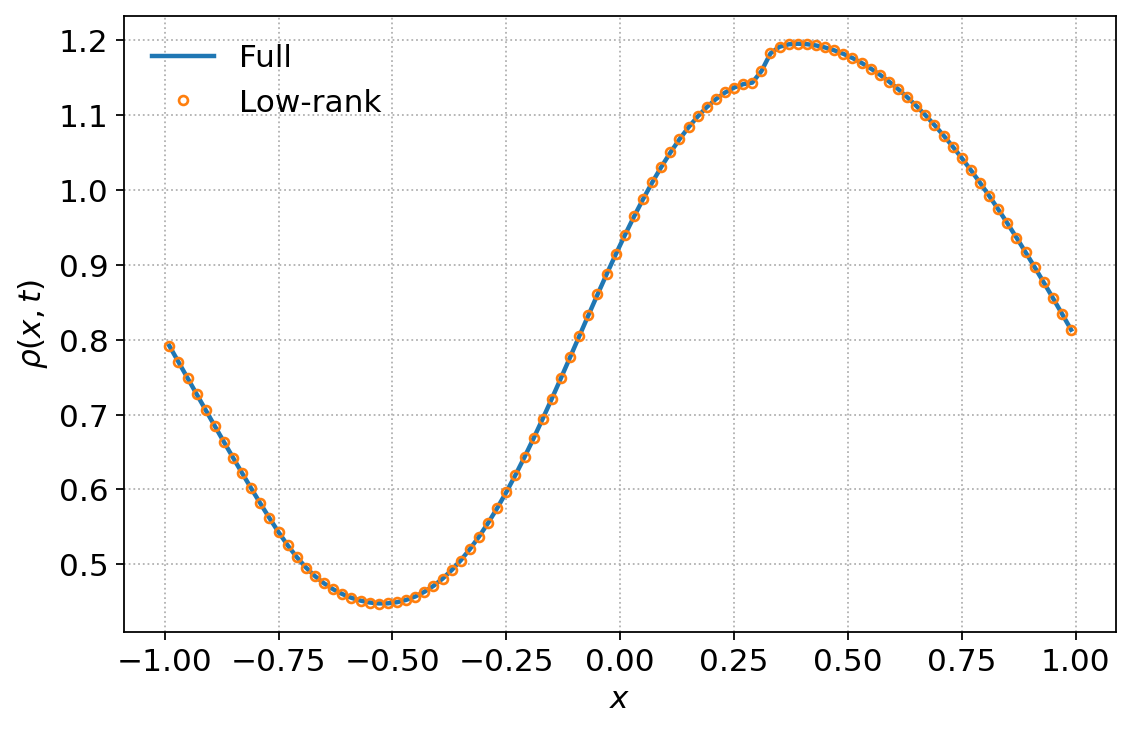}
    \caption{Density at $t=0.10$.}
    \label{fig:mixed-density-010}
  \end{subfigure}\hfill
  \begin{subfigure}[t]{0.48\textwidth}
    \centering
    \includegraphics[width=\linewidth]{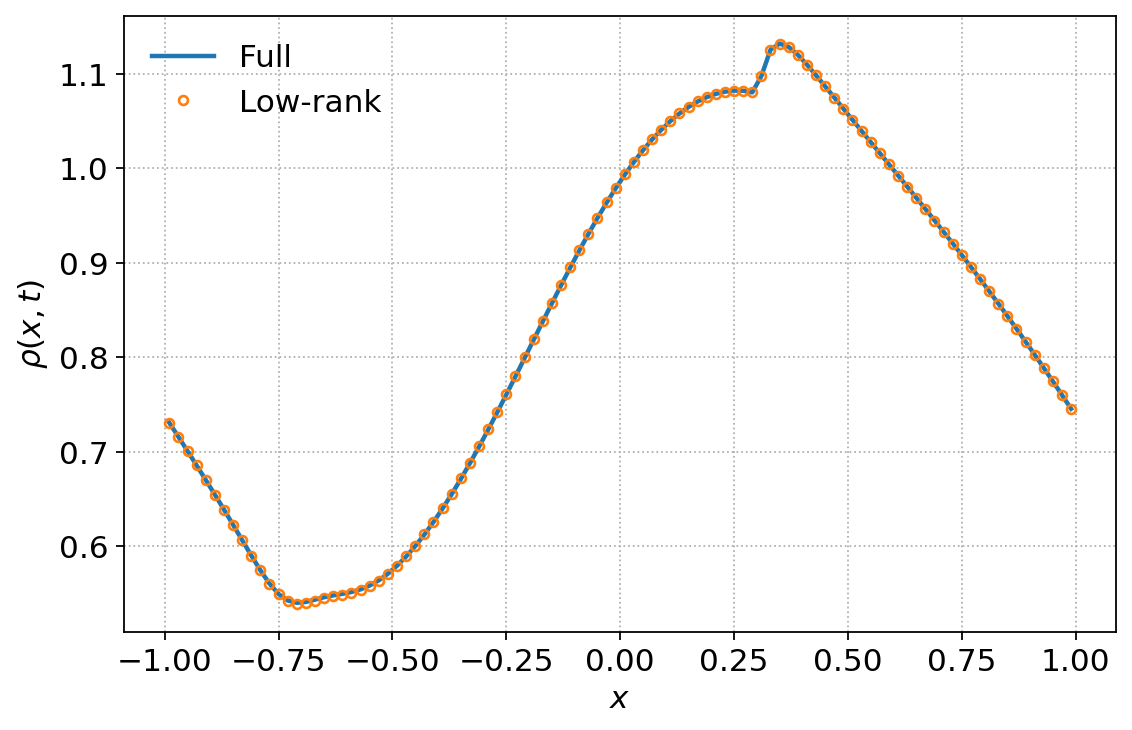}
    \caption{Density at $t=0.30$.}
    \label{fig:mixed-density-030}
  \end{subfigure}

  \vspace{0.6em}

  \begin{subfigure}[t]{0.48\textwidth}
    \centering
    \includegraphics[width=\linewidth]{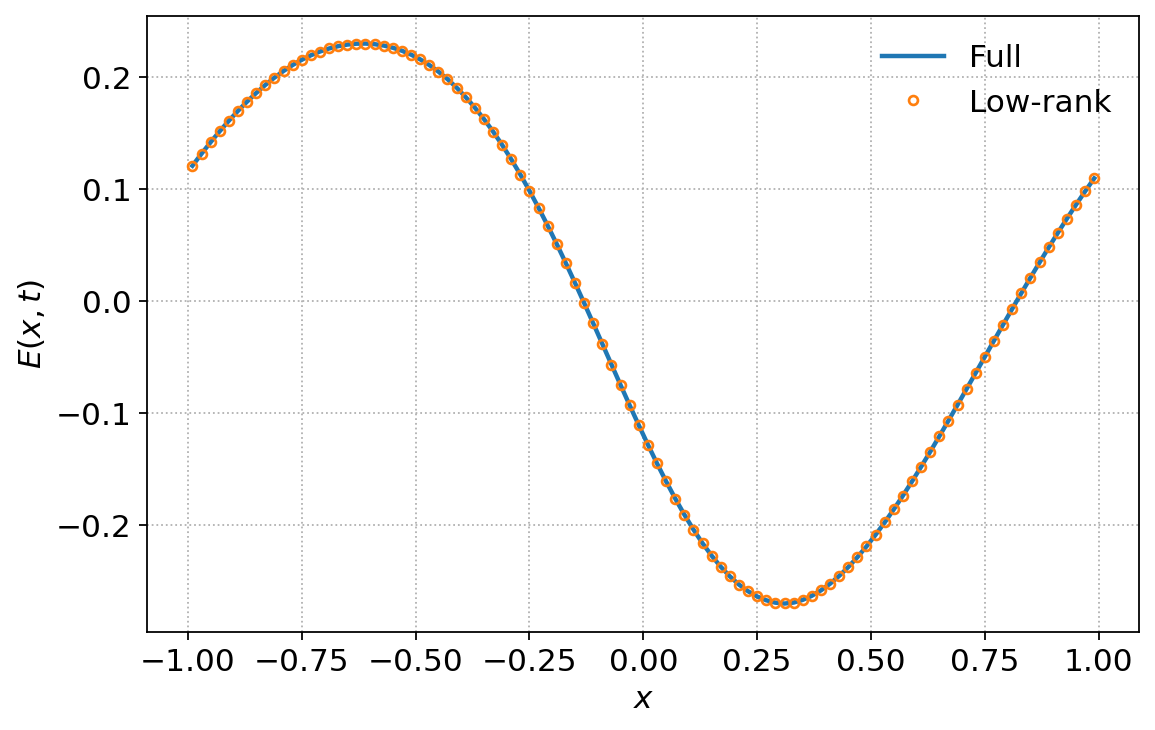}
    \caption{Electric field at $t=0.10$.}
    \label{fig:mixed-efield-010}
  \end{subfigure}\hfill
  \begin{subfigure}[t]{0.48\textwidth}
    \centering
    \includegraphics[width=\linewidth]{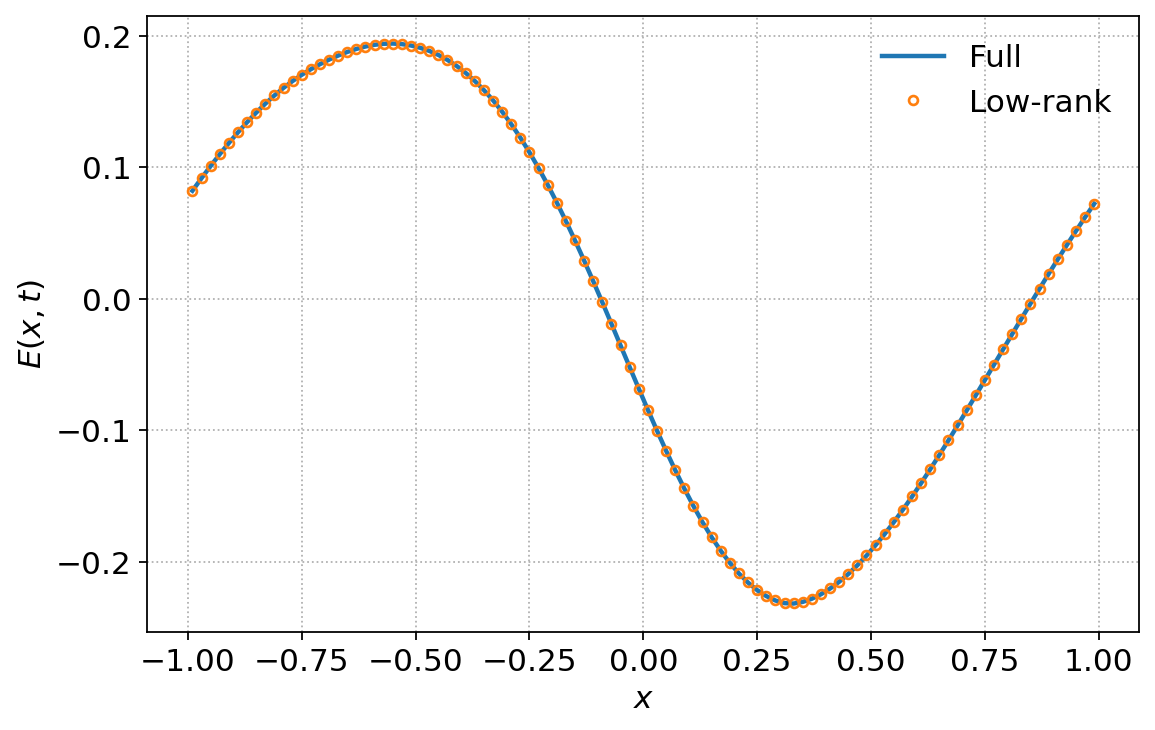}
    \caption{Electric field at $t=0.30$.}
    \label{fig:mixed-efield-030}
  \end{subfigure}
  \caption{Mixed-regime test: comparison of macroscopic density (top row) and electric field (bottom row) between the full-tensor reference solution (solid lines) and the first-order low-rank IMEX scheme (circles) at $t=0.10$ and $t=0.30$.}
  \label{fig:mixed-density-efield}
\end{figure}

\begin{figure}[htbp]
  \centering
  \begin{subfigure}[t]{0.32\textwidth}
    \centering
    \includegraphics[width=\linewidth]{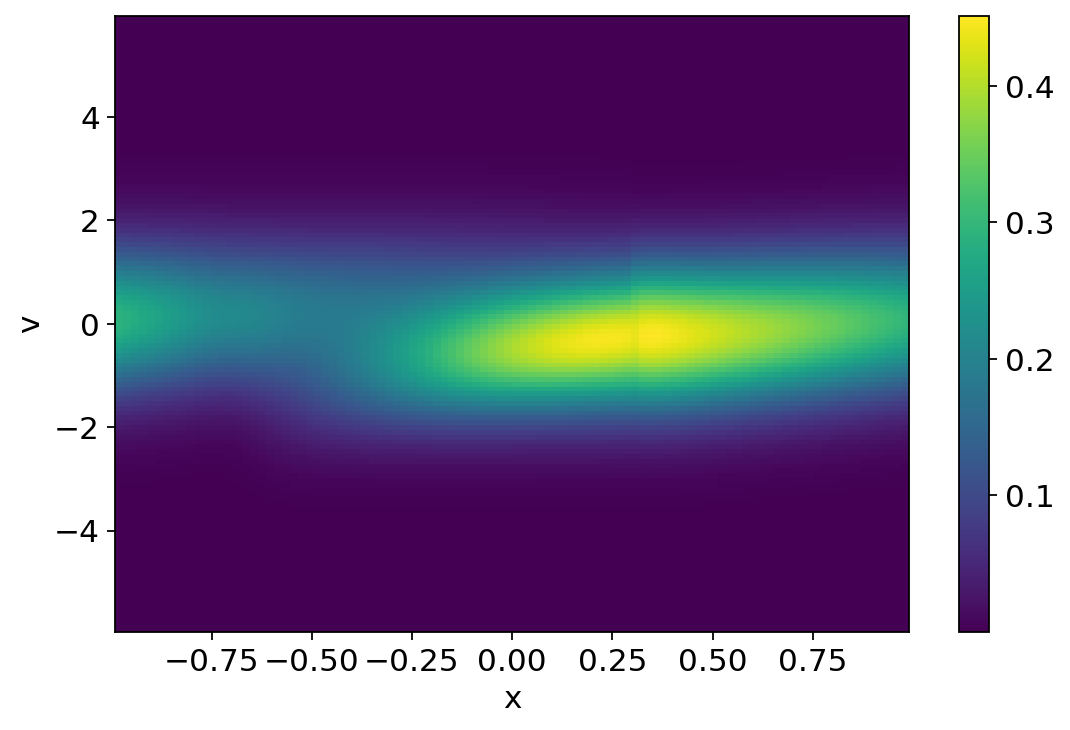}
    \caption{Full model.}
    \label{fig:phase-full}
  \end{subfigure}\hfill
  \begin{subfigure}[t]{0.32\textwidth}
    \centering
    \includegraphics[width=\linewidth]{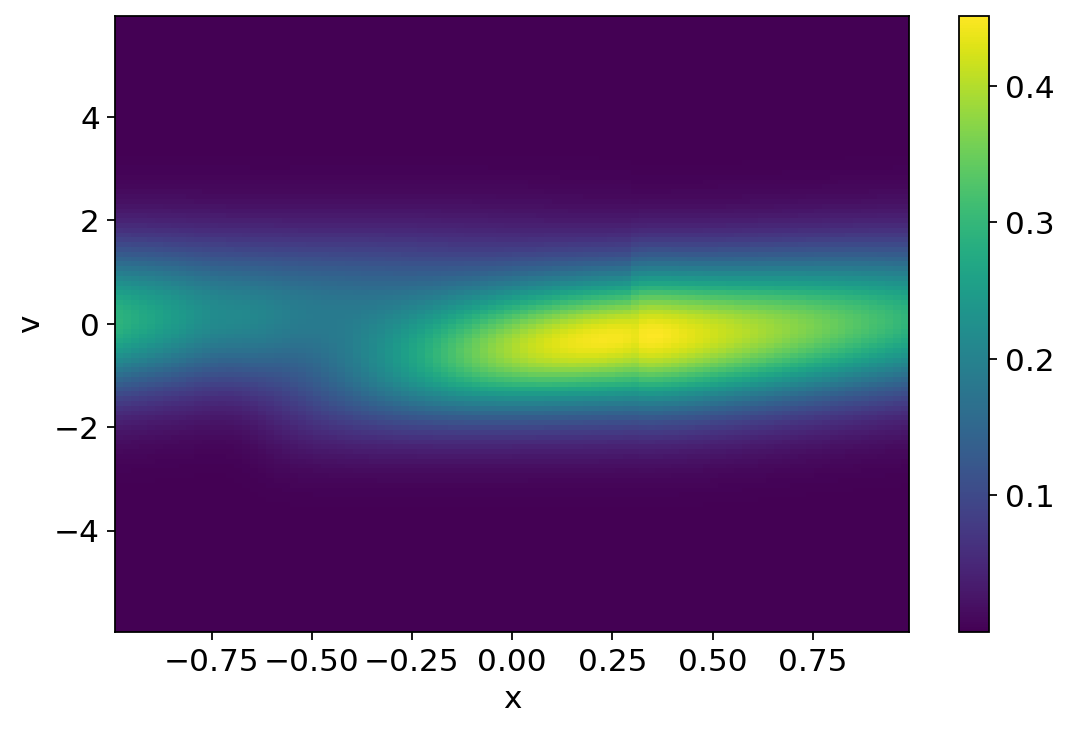}
    \caption{Low-rank model.}
    \label{fig:phase-lowrank}
  \end{subfigure}\hfill
  \begin{subfigure}[t]{0.33\textwidth}
    \centering
    \includegraphics[width=\linewidth]{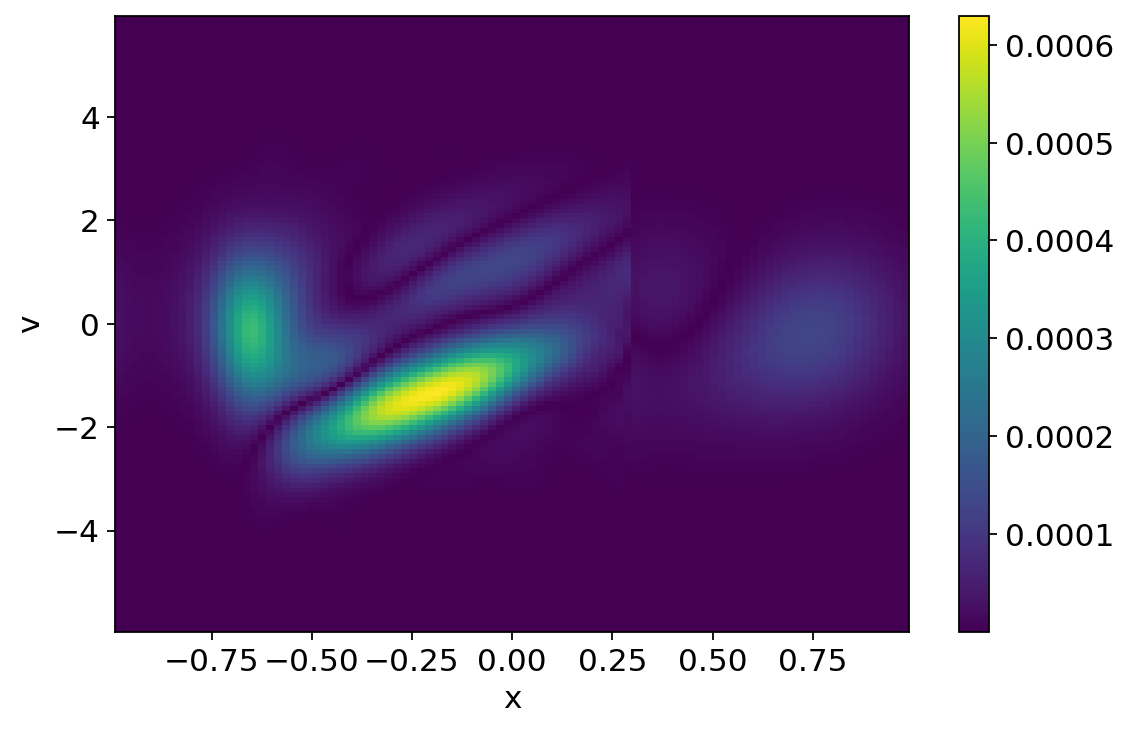}
    \caption{$|f_\text{full}-f_\text{low}|$.}
    \label{fig:phase-absdiff}
  \end{subfigure}
  \caption{Phase plots in the mixed regime test: full tensor, low-rank, and absolute difference.}
  \label{fig:mixed-phases}
\end{figure}

Finally, Figure~\ref{fig:mixed-eps-and-errors} reports the spatial profile of $\varepsilon(x)$ together with the pointwise AP error
\[
  \mathcal E_{AP}(x,t) = \|f(x,\cdot,t) - \rho(x,t)M(x,\cdot,t)\|_{L^1_v}
\]
at $t=0.10$ and $t=0.30$. The error is of order $\mathcal{O}(\varepsilon(x))$, consistent with the expected AP behavior and with the mixed-regime results for the full-tensor reference scheme.

\begin{figure}[htbp]
  \centering
  \begin{subfigure}[t]{0.33\textwidth}
    \centering
    \includegraphics[width=\linewidth]{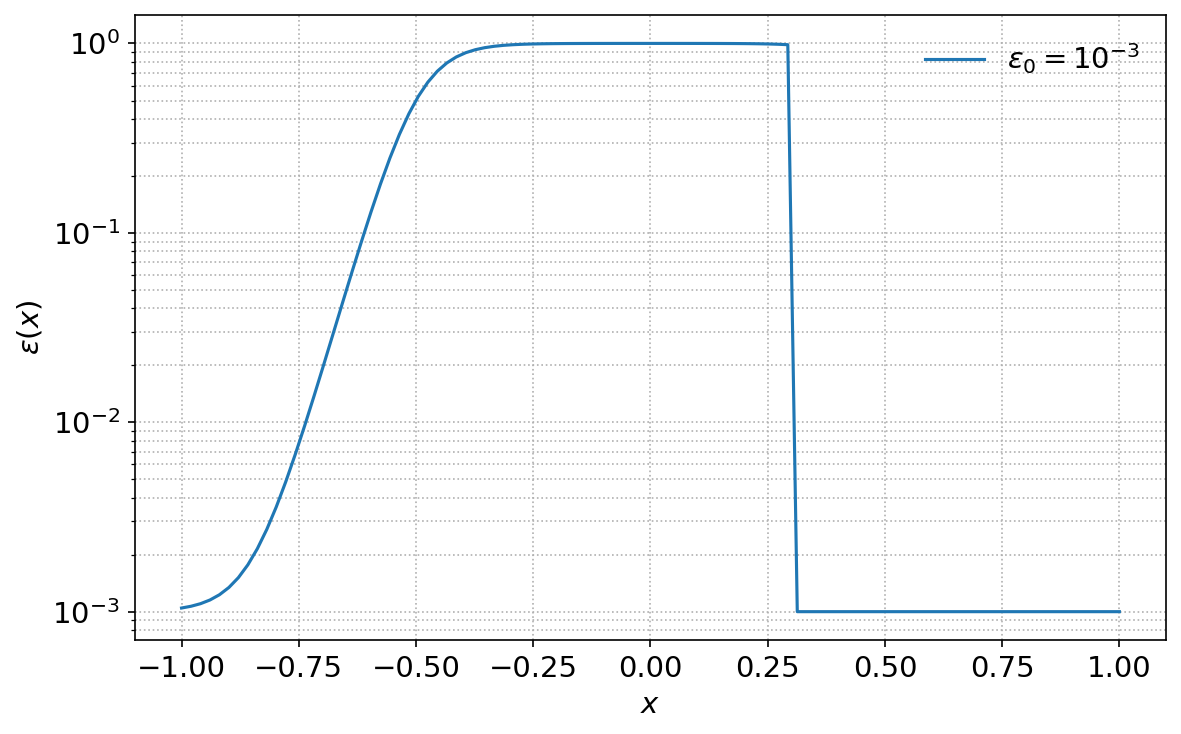}
    \caption{$\varepsilon(x)$ profile.}
    \label{fig:eps-profile}
  \end{subfigure}\hfill
  \begin{subfigure}[t]{0.32\textwidth}
    \centering
    \includegraphics[width=\linewidth]{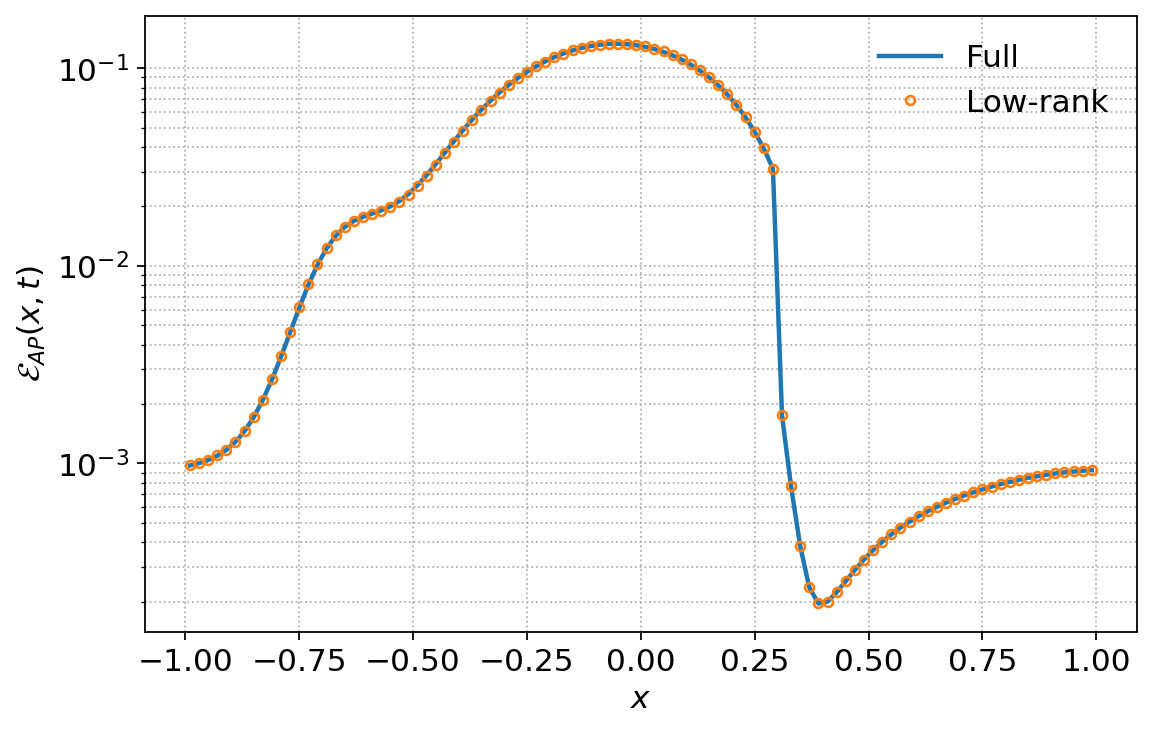}
    \caption{$\mathcal{E}_{AP}(x,t)$ at $t=0.10$.}
    \label{fig:l1v-010}
  \end{subfigure}\hfill
  \begin{subfigure}[t]{0.32\textwidth}
    \centering
    \includegraphics[width=\linewidth]{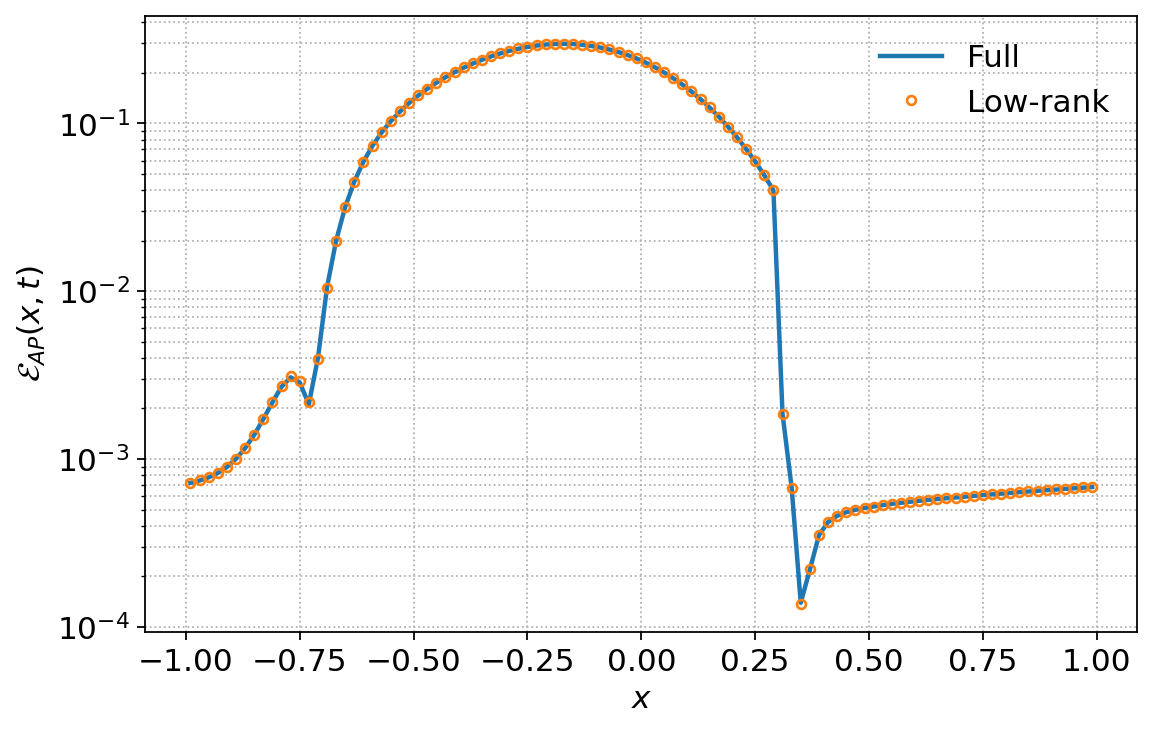}
    \caption{$\mathcal{E}_{AP}(x,t)$ at $t=0.30$.}
    \label{fig:l1v-030}
  \end{subfigure}
  \caption{Spatial profiles for the mixed-regime test: (a) the Knudsen number $\varepsilon(x)$; (b) and (c) the pointwise AP error $\mathcal{E}_{AP}(x,t)$ at $t=0.10$ and $t=0.30$, respectively.}
  \label{fig:mixed-eps-and-errors}
\end{figure}

\subsection{Bump-on-tail instability}
\label{sec:bump_on_tail}
The bump-on-tail instability (adapted from \cite{coughlin2022efficient}) serves as a nonlinear kinetic benchmark. The initial distribution consists of a thermal background and a cold shifted beam:
\[
  f^0(x,v)=\frac{\rho^0(x)}{\sqrt{2\pi}}
  \left(e^{-\tfrac{v^2}{2}}+e^{-\tfrac{(v-1.5)^2}{2T_{\rm cold}}}\right),
  \qquad T_{\rm cold}=5\times10^{-3},
\]
with spatially localized density and background charge
\[
  \rho^0(x)=0.3+\exp\!\Big(-\frac{(x-0.3)^2}{0.01}\Big),\qquad
  \eta(x)=0.3+\exp\!\Big(-\frac{(x-0.6)^2}{0.01}\Big),
\]
The Knudsen number is uniform: $\varepsilon(x)\equiv\varepsilon_0$. We take $N_x=100$, $N_v=128$, time step $\Delta t=10^{-3}$, and rank $r=6$. 
We examine two regimes:
\begin{itemize}\setlength{\itemsep}{2pt}
  \item \textbf{Fluid regime:} $\varepsilon_0=10^{-6}$, final time $T=0.5$.
  \item \textbf{Kinetic regime:} $\varepsilon_0=1$, final time $T=0.1$.
\end{itemize}
The rank $r=6$ is deliberately small relative to $(N_x,N_v)$ in order to demonstrate that the first-order low-rank IMEX scheme can capture the main bump-on-tail dynamics with only a few modes.
We note that Coughlin and Hu \cite{coughlin2022efficient} simulated this same benchmark using a quotient-based DLR method for $f/M$; while their approach is optimally efficient ($r \approx 3, 4$) in the fluid limit, it requires significantly higher ranks ($r \approx 20$) to capture the non-equilibrium dynamics in the kinetic regime. In contrast, our proposed method maintains high accuracy with a consistently small rank ($r=6$) across both regimes.

Figure~\ref{fig:tb-density-efield} displays the macroscopic density and electric field. 
The first-order low-rank IMEX scheme yields stable results that are consistent with the full-tensor reference solutions in both kinetic and fluid regimes. This confirms that the method effectively captures the correct macroscopic dynamics in these scenarios.

\begin{figure}[htbp]
  \centering
  \begin{subfigure}[t]{0.48\textwidth}
    \centering
    \includegraphics[width=\linewidth]{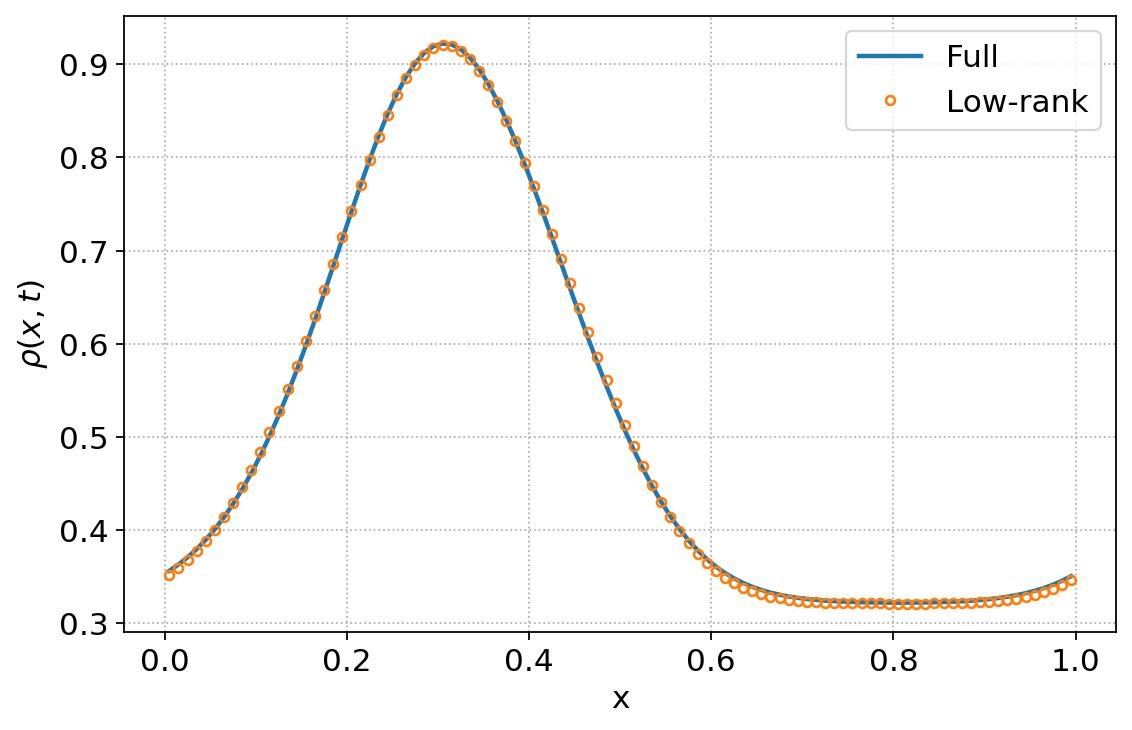}
    \caption{Density at $t=0.10$, $\varepsilon_0=1$.}
    \label{fig:tb-density-010}
  \end{subfigure}\hfill
  \begin{subfigure}[t]{0.48\textwidth}
    \centering
    \includegraphics[width=\linewidth]{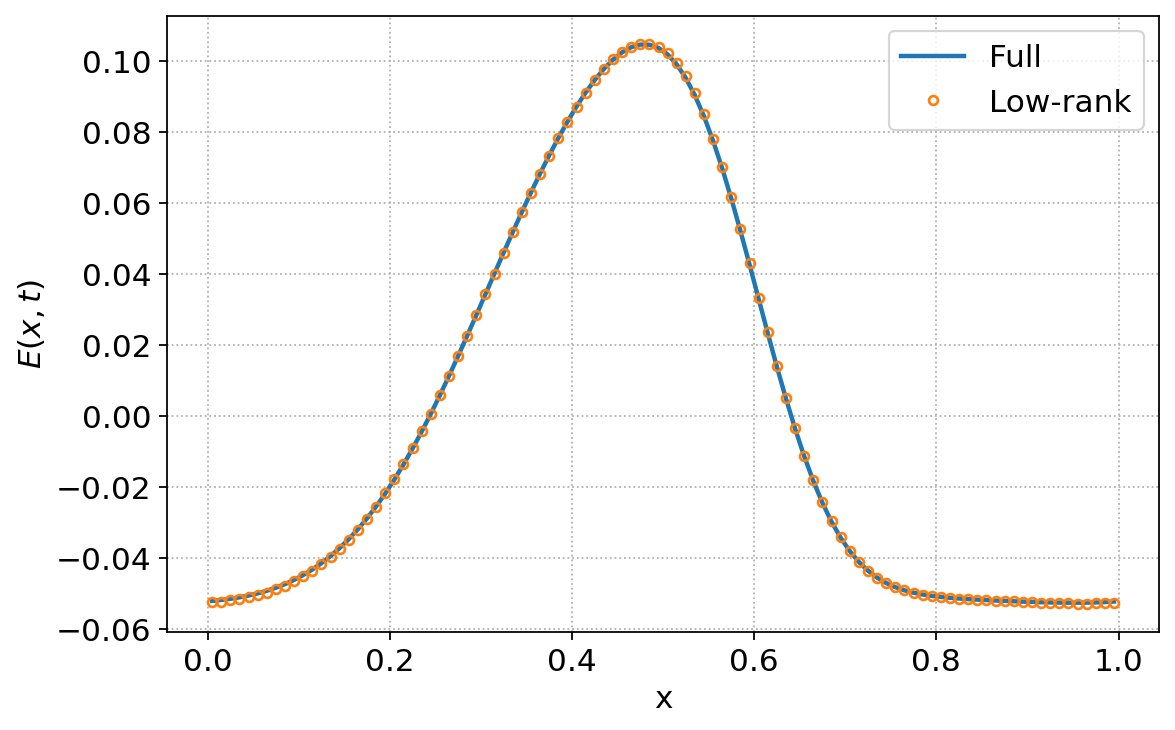}
    \caption{Electric field at $t=0.10$, $\varepsilon_0=1$.}
    \label{fig:tb-efield-010}
  \end{subfigure}

  \vspace{0.6em}
  \begin{subfigure}[t]{0.48\textwidth}
    \centering
    \includegraphics[width=\linewidth]{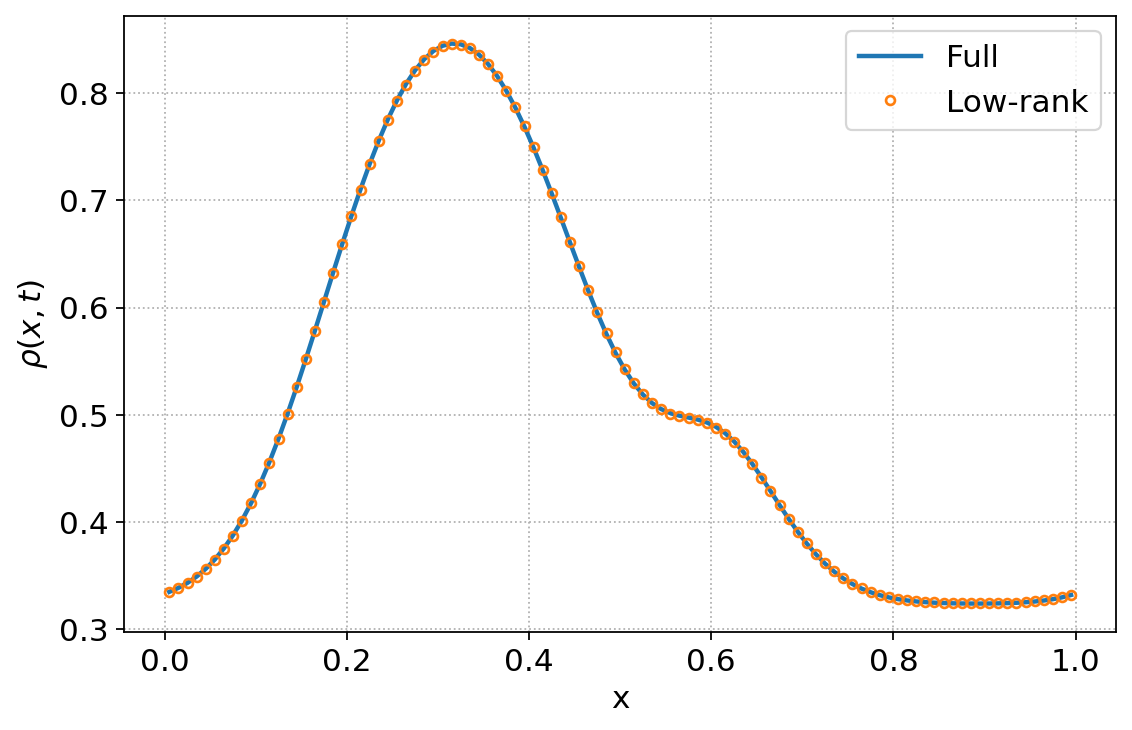}
    \caption{Density at $t=0.50$, $\varepsilon_0=10^{-6}$.}
    \label{fig:tb-density-050}
  \end{subfigure} \hfill
  \begin{subfigure}[t]{0.48\textwidth}
    \centering
    \includegraphics[width=\linewidth]{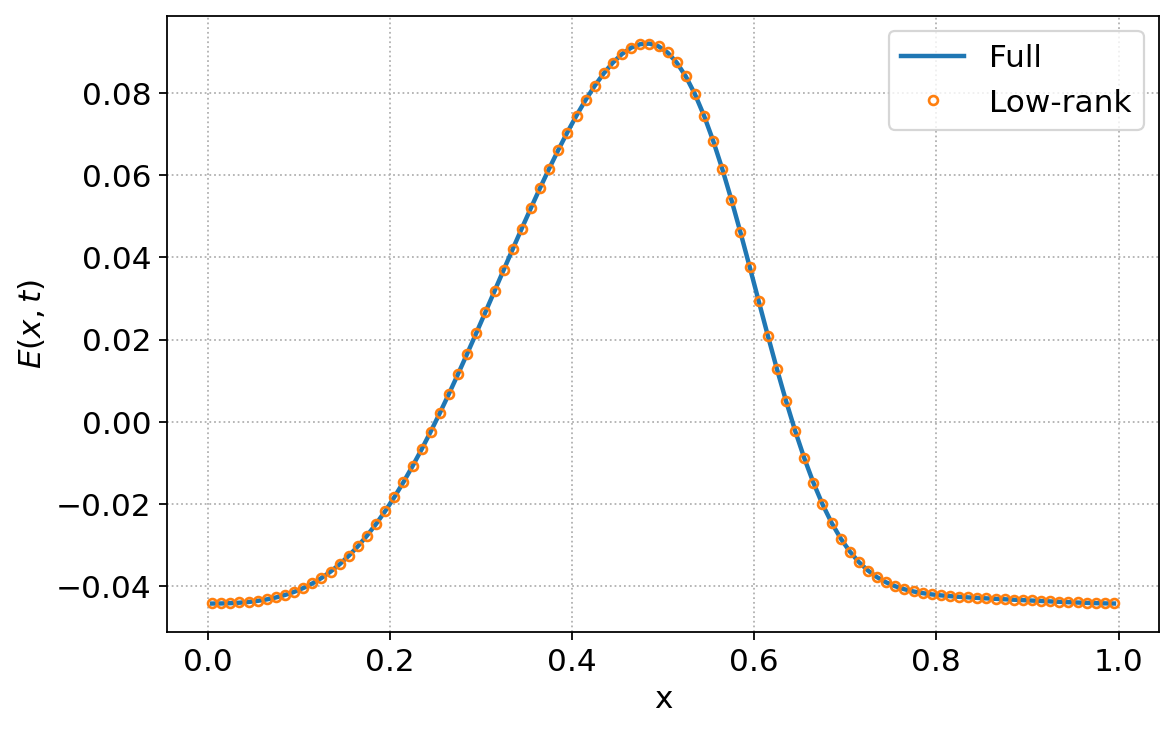}
    \caption{Electric field at $t=0.50$, $\varepsilon_0=10^{-6}$.}
    \label{fig:tb-efield-050}
  \end{subfigure}
  \caption{Bump-on-tail test: macroscopic density and electric field in the kinetic ($t=0.10$, $\varepsilon_0=1$) and fluid ($t=0.50$, $\varepsilon_0=10^{-6}$) regimes.}
  \label{fig:tb-density-efield}
\end{figure}

To visualize the phase-space dynamics, Figures~\ref{fig:tb-phases-010} and \ref{fig:tb-phases-050} show phase space plots of the full-tensor reference solution, the low-rank solution, and their absolute difference at representative times in the kinetic and fluid regimes, respectively. In the kinetic case ($t=0.10$, $\varepsilon_0=1$), the low-rank solution approximates the full-tensor reference solution well, accurately capturing the phase-space structures. In the fluid case ($t=0.50$, $\varepsilon_0=10^{-6}$), the phase-space distribution has nearly relaxed to a local Maxwellian, and the low-rank approximation is extremely accurate across the domain.

\begin{figure}[htbp]
  \centering
  \begin{subfigure}[t]{0.32\textwidth}
    \centering
    \includegraphics[width=\linewidth]{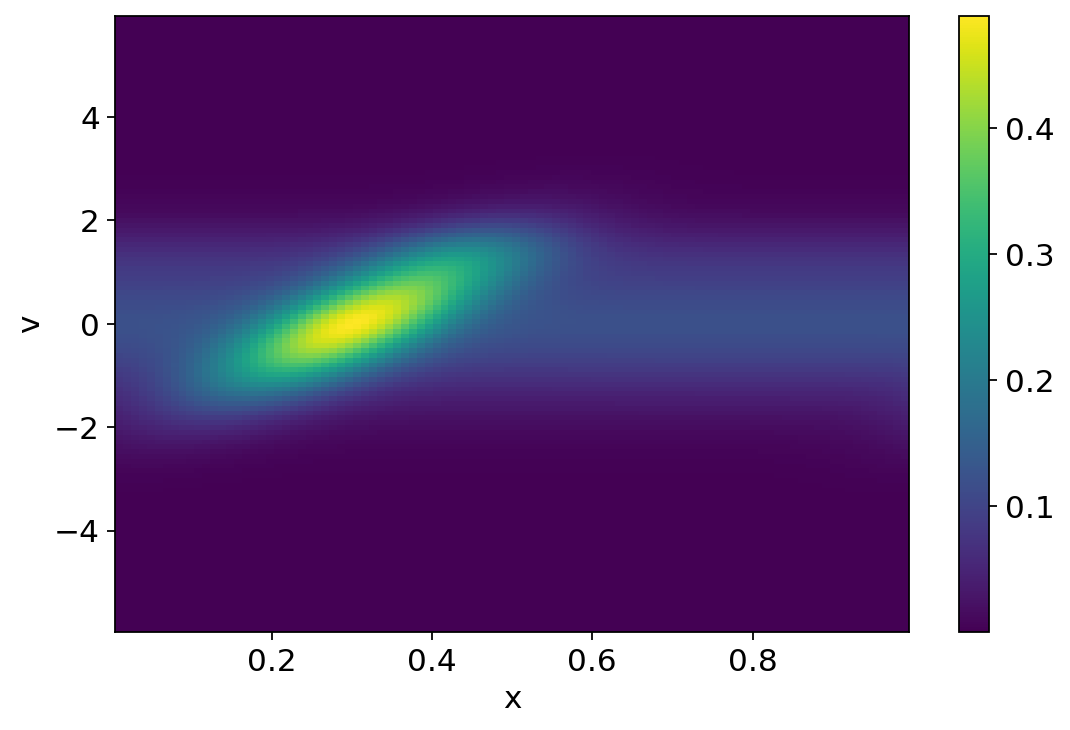}
    \caption{Full model ($t=0.10$).}
    \label{fig:tb-phase-full-010}
  \end{subfigure}\hfill
  \begin{subfigure}[t]{0.32\textwidth}
    \centering
    \includegraphics[width=\linewidth]{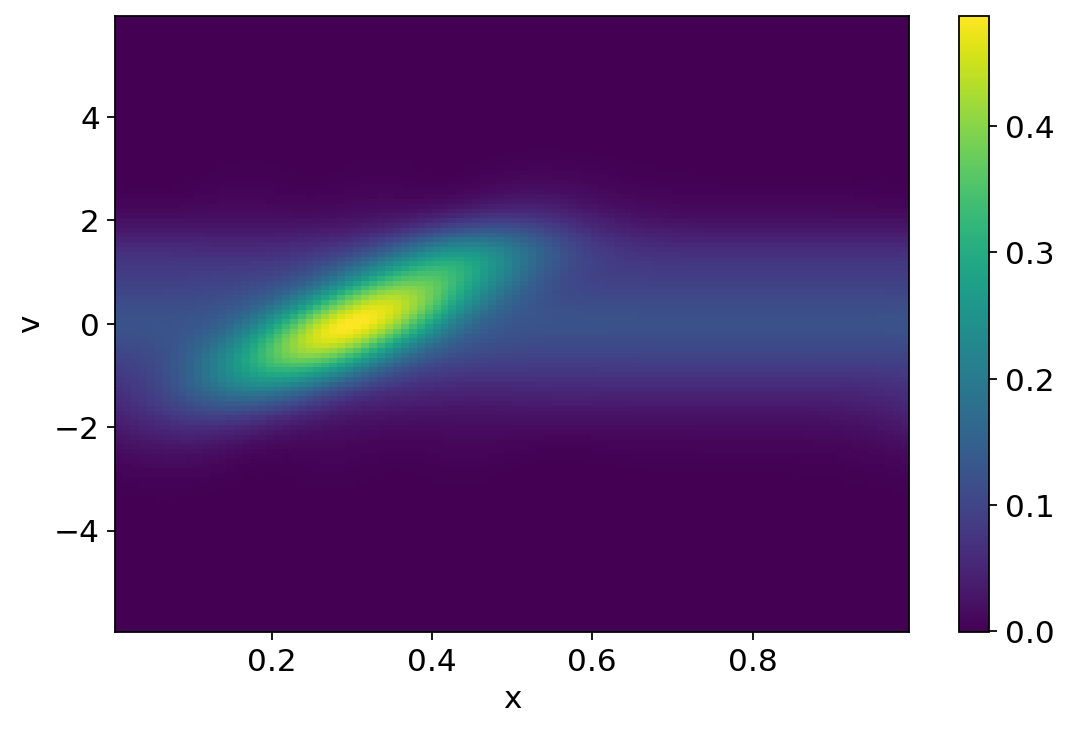}
    \caption{Low-rank model ($t=0.10$).}
    \label{fig:tb-phase-low-010}
  \end{subfigure}\hfill
  \begin{subfigure}[t]{0.32\textwidth}
    \centering
    \includegraphics[width=\linewidth]{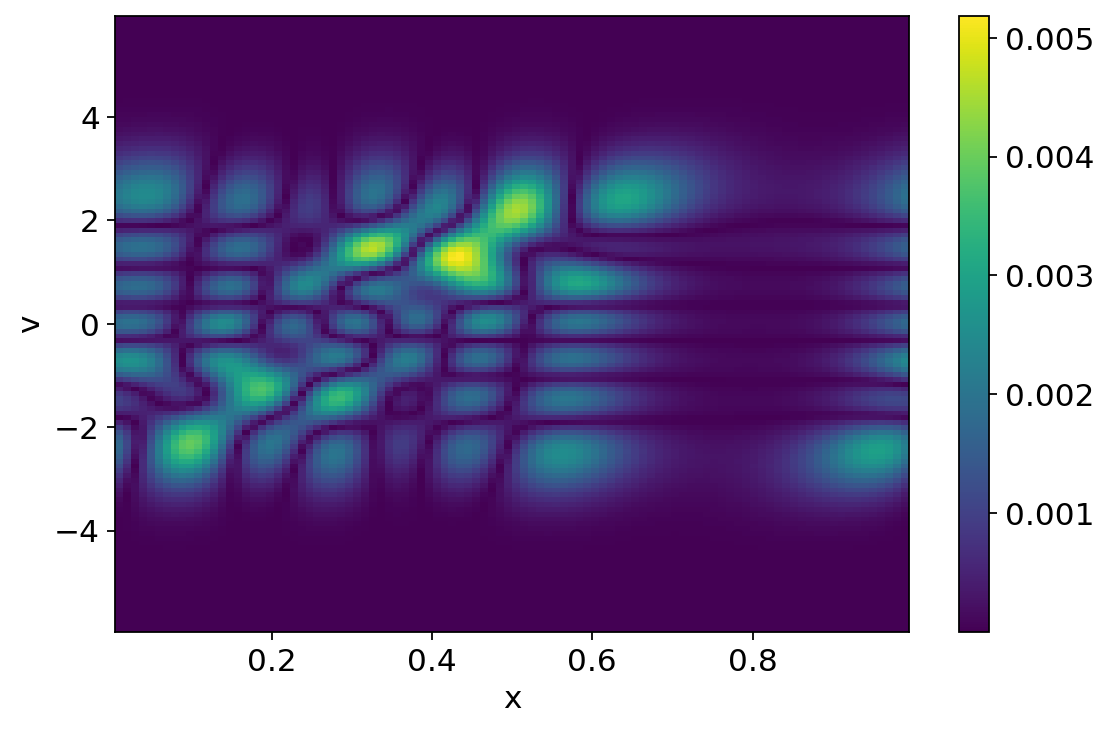}
    \caption{$|f_\text{full}-f_\text{low}|$ ($t=0.10$).}
    \label{fig:tb-phase-diff-010}
  \end{subfigure}
  \caption{Bump-on-tail phase space plots at $t=0.10$ in the kinetic regime ($\varepsilon_0=1$).}
  \label{fig:tb-phases-010}
\end{figure}

\begin{figure}[htbp]
  \centering
  \begin{subfigure}[t]{0.32\textwidth}
    \centering
    \includegraphics[width=\linewidth]{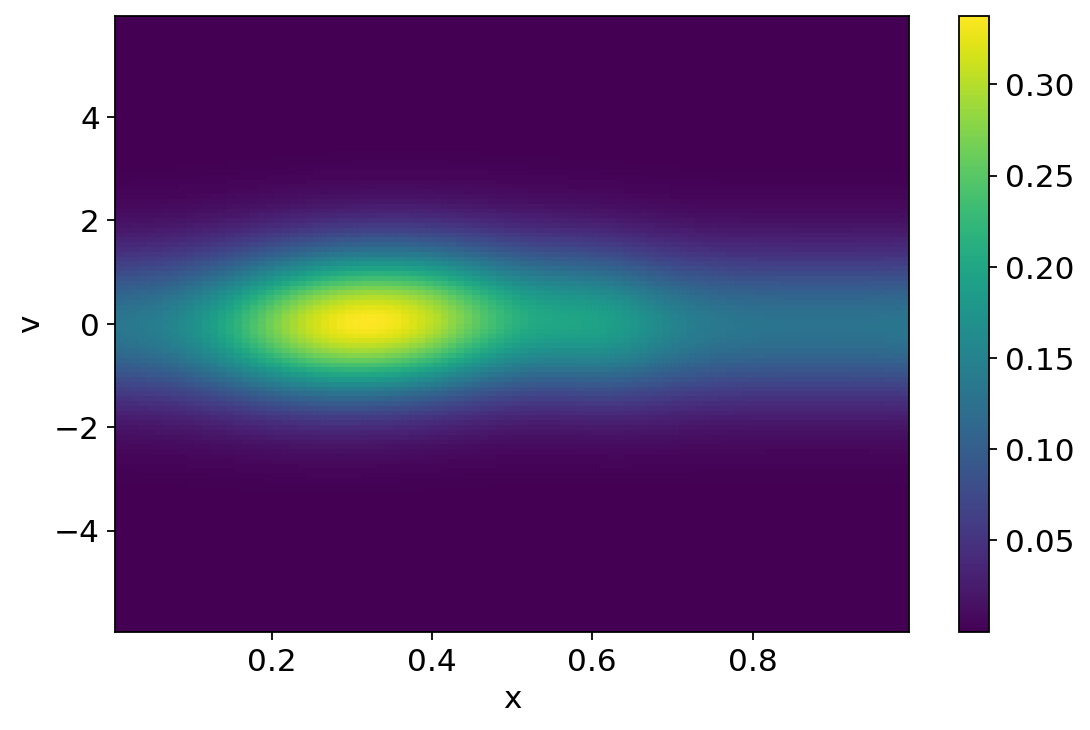}
    \caption{Full model ($t=0.50$).}
    \label{fig:tb-phase-full-050}
  \end{subfigure}\hfill
  \begin{subfigure}[t]{0.32\textwidth}
    \centering
    \includegraphics[width=\linewidth]{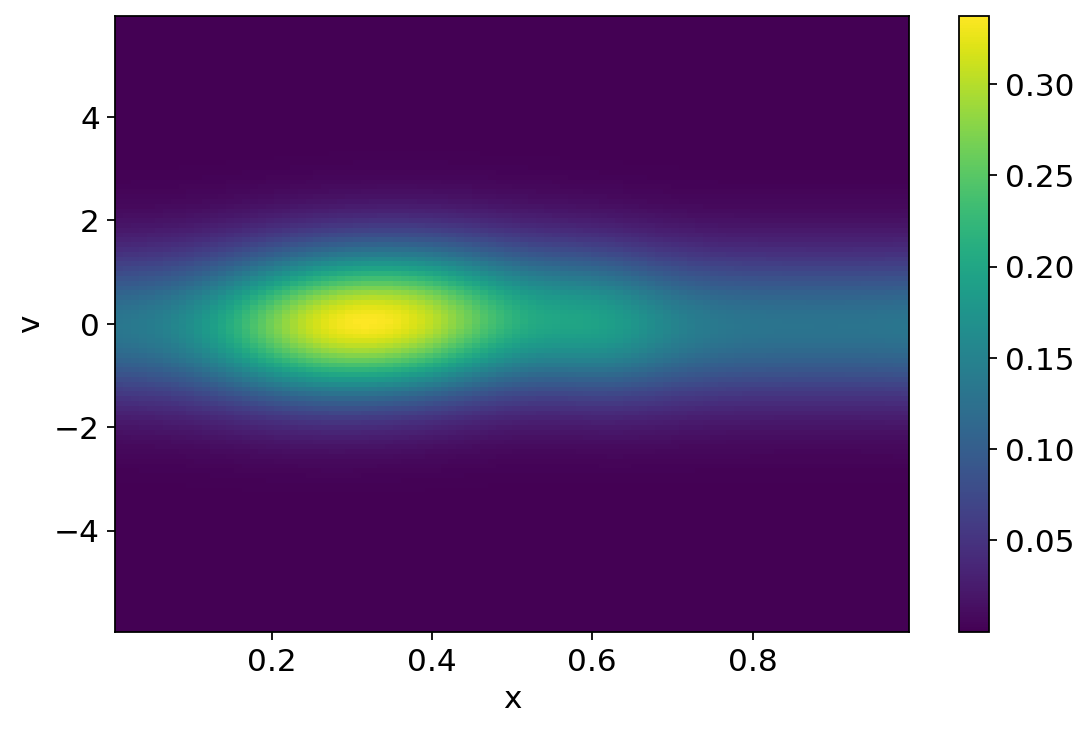}
    \caption{Low-rank model ($t=0.50$).}
    \label{fig:tb-phase-low-050}
  \end{subfigure}\hfill
  \begin{subfigure}[t]{0.32\textwidth}
    \centering
    \includegraphics[width=\linewidth]{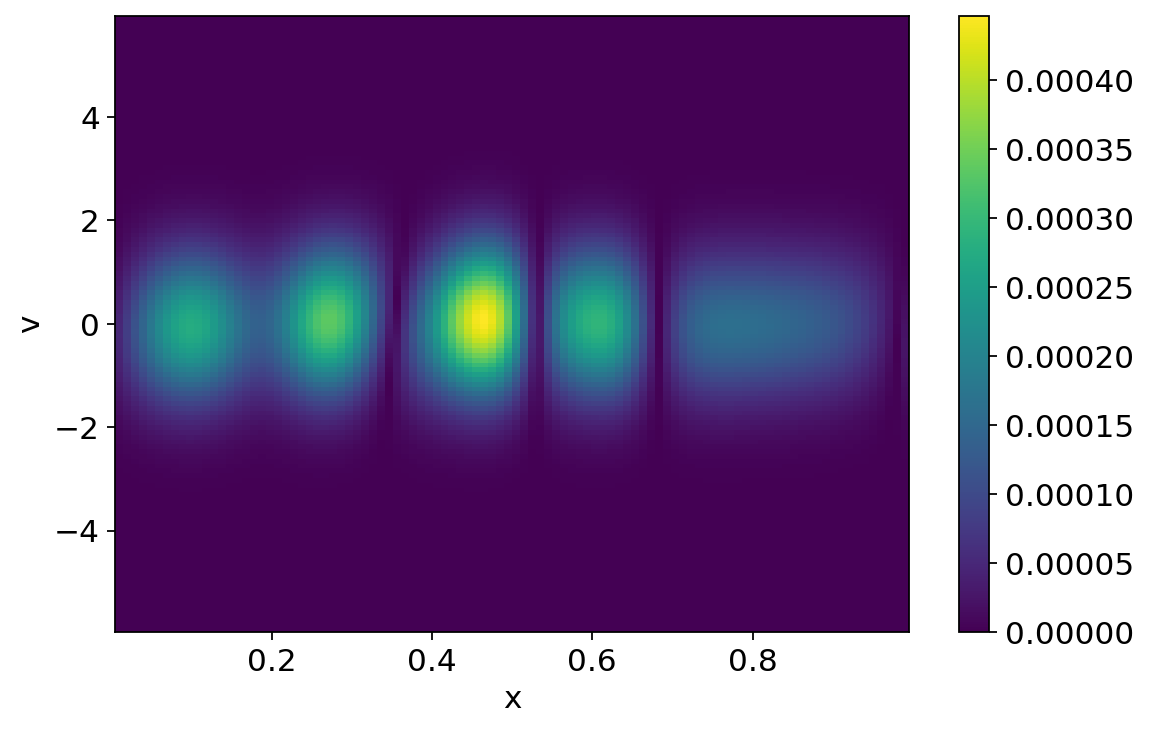}
    \caption{$|f_\text{full}-f_\text{low}|$ ($t=0.50$).}
    \label{fig:tb-phase-diff-050}
  \end{subfigure}
  \caption{Bump-on-tail phase space plots at $t=0.50$ in the fluid regime ($\varepsilon_0=10^{-6}$).}
  \label{fig:tb-phases-050}
\end{figure}

Finally, Figure~\ref{fig:tb-l1-errors} plots the global AP error $\mathcal E_{AP}(t)$ for both regimes. In the fluid case ($\varepsilon_0=10^{-6}$) the error decays rapidly and remains very small, indicating fast relaxation to the local Maxwellian shape. 
In the kinetic case ($\varepsilon_0=1$) the deviation remains larger but bounded, reflecting the persistent kinetic structures associated with the bump-on-tail instability. 

\begin{figure}[htbp]
  \centering
  \begin{subfigure}[t]{0.48\textwidth}
    \centering
    \includegraphics[width=\linewidth]{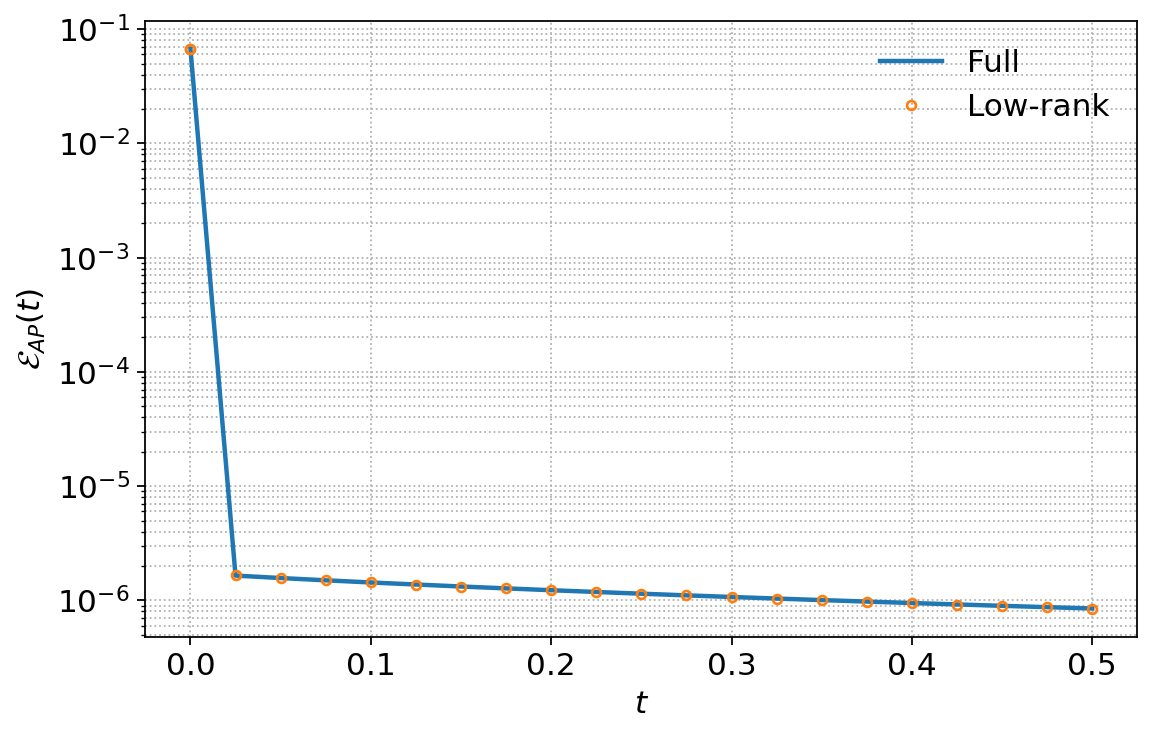}
    \caption{$\mathcal{E}_{AP}(t)$ vs.\ $t$ (fluid, $\varepsilon_0=10^{-6}$).}
    \label{fig:tb-l1-fluid}
  \end{subfigure}\hfill
  \begin{subfigure}[t]{0.48\textwidth}
    \centering
    \includegraphics[width=\linewidth]{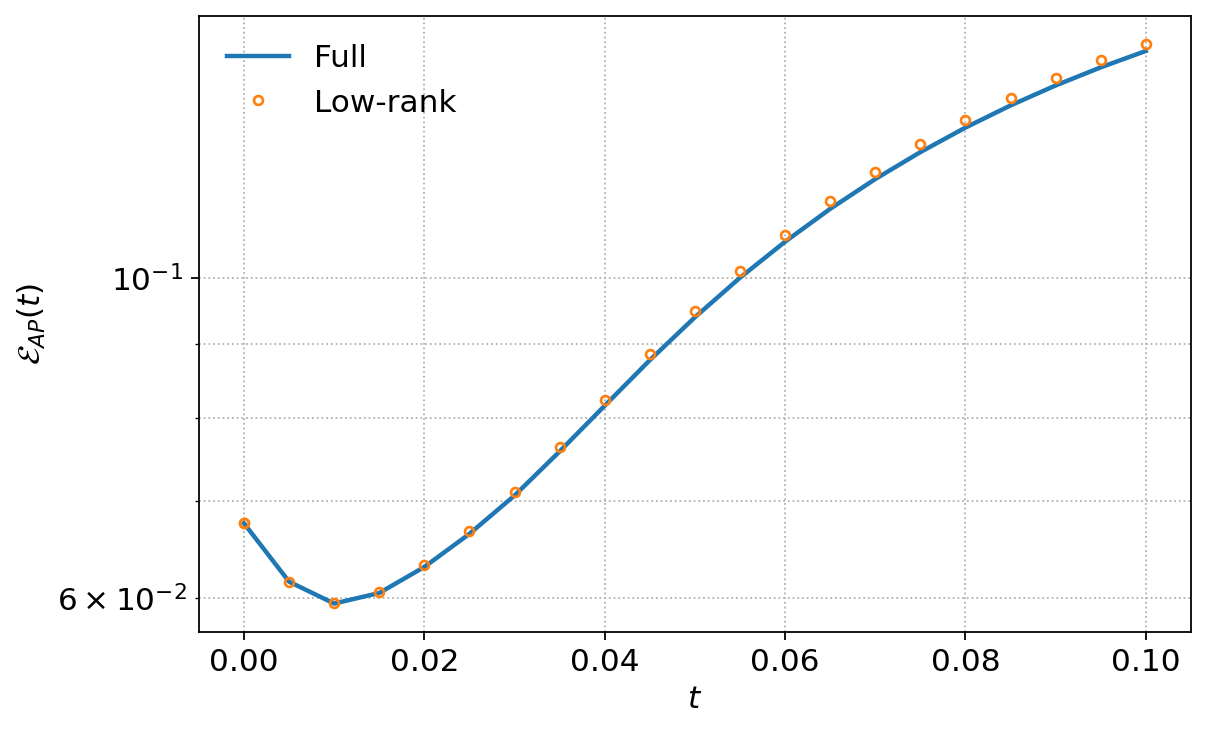}
    \caption{$\mathcal{E}_{AP}(t)$ vs.\ $t$ (kinetic, $\varepsilon_0=1$).}
    \label{fig:tb-l1-kinetic}
  \end{subfigure}
  \caption{Bump-on-tail AP error $\mathcal{E}_{AP}(t)$ over time for the first-order low-rank scheme in fluid and kinetic regimes.}
  \label{fig:tb-l1-errors}
\end{figure}

\section{Conclusions}
\label{sec:con}

We have presented a dynamical low-rank (DLR) method for the Vlasov--Poisson--Fokker--Planck system that effectively addresses both the curse of dimensionality and the stiffness of the high-field regime. The cornerstone of this method is a conservative, separable discretization of the Fokker--Planck operator. By constructing a three-point stencil where coefficients factor into space–only and velocity–only components, we enable the efficient implicit treatment of the Fokker--Planck operator directly on the low-rank manifold without requiring expensive full-tensor reconstruction. To handle the stiffness within the low-rank framework, we proposed both first-order and second-order schemes using proper IMEX treatment to ensure stability. We provided a theoretical analysis for the first-order low-rank scheme, proving an asymptotic–preserving property in the case of small spatial field fluctuations. Numerical benchmarks demonstrate that the proposed methods are asymptotically preserving and robust, maintaining the reduced computational and storage cost while accurately capturing macroscopic limits and kinetic structures at modest ranks. \modifyfirst{Extending the present separable AP IMEX construction to alternative DLR time integrators, such as BUG-type schemes~\cite{ceruti2022unconventional,CKL22} which avoid the backward-in-time S-step entirely and admit rank-adaptive variants, is a natural direction for future work.}

\section*{Acknowledgement}
This work was partially supported by DOE grant DE-SC0023164, NSF grants DMS-2409858 and IIS-2433957, and DoD MURI grant FA9550-24-1-0254.

\section*{Data Availability Statement}
The code reproducing the numerical experiments in this paper is publicly available in a GitHub repository: \url{https://github.com/shzhang3/DLR-VPFP}.

\bibliographystyle{siamplain}
\bibliography{references}
\end{document}